\newtheorem{theorem}{Theorem}
\newtheorem{lemma}[theorem]{Lemma}
\newtheorem{corollary}[theorem]{Corollary}
\theoremstyle{remark}
\newtheorem{remark}[theorem]{Remark}
\numberwithin{equation}{section}
\newtheorem{definition}[theorem]{Definition}
\NewDocumentEnvironment{manual}{O{theorem}m}
 {%
  \addtocounter{theorem}{-1}%
  \begin{#1}
 }
 {\end{#1}}
\def\div{\hbox{\rm div}\,}
\def\loc{{\mathrm loc}}
\def\e{{\varepsilon}}
\def\XXint#1#2#3{{\setbox0=\hbox{$#1{#2#3}{\int}$ }
\vcenter{\hbox{$#2#3$ }}\kern-.6\wd0}}
\begin{document}

\title{Radiation of the energy-critical wave equation with compact support}

\author{Zhen Lei \footnotemark[1]\ \footnotemark[2]
\and Xiao Ren  \footnotemark[1]\ \footnotemark[3]
\and Zhaojie Yang \footnotemark[1]\ \footnotemark[4]}
\maketitle

\renewcommand{\thefootnote}{\fnsymbol{footnote}}
\footnotetext[1]{School of Mathematical Sciences; LMNS and Shanghai
Key Laboratory for Contemporary Applied Mathematics, Fudan University, Shanghai 200433, P. R.China.} \footnotetext[2]{Email: zlei@fudan.edu.cn}
\footnotetext[3]{Email: xiaoren18@fudan.edu.cn}
\footnotetext[4]{Email: yangzj20@fudan.edu.cn}

\begin{abstract}We prove exterior energy lower bounds for (nonradial) solutions to the energy-critical nonlinear wave equation in space dimensions $3 \le d \le 5$,  with compactly supported initial data. In particular, it is shown that nontrivial global solutions with compact spatial support must be radiative in the sense that at least one of the following is true:
\begin{itemize}
\item[(1)] $\int_{|x|> |t|} \left( |\partial_t u|^2 + |\nabla u|^2  \right) \mathrm{d}x \ge \eta_1(u) > 0, \ \mathrm{for} \ \mathrm{all} \ t \ge 0 \ \mathrm{or} \ \mathrm{all} \ t \le 0,$
\item[(2)] $\int_{|x|> -\varepsilon +|t|} \left( |\partial_t u|^2 + |\nabla u|^2  \right) \mathrm{d}x \ge \eta_2(\varepsilon, u) > 0, \ \mathrm{for} \ \mathrm{all} \ t \in \mathbb{R},  \varepsilon > 0.$
\end{itemize}
In space dimensions 3 and 4, a nontrivial soliton background is also considered. As an application, we obtain partial results on the rigidity conjecture concerning solutions with the compactness property, including a new proof for the global existence of such solutions.

\end{abstract}

\renewcommand{\thefootnote}{\arabic{footnote}}


\section{Introduction}
We consider the focusing energy-critical wave equation in $\mathbb{R}^{1+d}, 3 \le d \le 5$,
\begin{equation} \label{202}
\begin{cases}
\partial_t^2 u - \Delta u - |u|^{\frac{4}{d-2}} u = 0, \\
u|_{t = 0} = u_0 \in \dot{H}^1, \quad \partial_t u|_{t=0} = u_1 \in L^2. 
\end{cases}
\end{equation}
In the important work of Duyckaerts, Kenig and Merle \cite{DKM}, soliton resolution for radial solutions to \eqref{202} with uniformly bounded $\dot{H}^1\times L^2$ norm in $d=3$ is established.  The key step of the proof there is a characterization of the ground-state solution $W = \left( 1 + \frac{|x|^2}{d(d-2)} \right)^{1-\frac{d}{2}}$ as, up to rescaling and sign change, the only nonzero radial solution that does not satisfy the so-called channel of energy inequality. Among other results, it is shown in \cite{DKM} that, for any nonzero radial global solution $u$ to \eqref{202} in $d=3$, if $u$ is not identical to $\pm {\lambda^{\frac12}} W(\lambda x)$ for any $\lambda > 0$ and any sign $+$ or $-$, then there exists $A>0$ and $\eta > 0$ such that, for all $t \ge 0$ or for all $t \le 0$,
\begin{equation} \label{000}
\int_{|x|> A+|t|} \left(|\partial_t u|^2 + |\nabla u|^2\right)  \mathrm{d}x \ge \eta.
\end{equation}
Soliton resolution along a sequence of times for \eqref{202} without the radial symmetry assumption is proved in \cite{DJKM}. Recently, the radial soliton resolution theorem in all odd  space dimensions is established in \cite{DKModd}. Variations of the channel of energy estimate \eqref{000} have continued to play essential roles in these works.  

%

Do we have channel of energy estimates similar to \eqref{000} in the \emph{nonradial} setting?   As remarked in \cite{DJKM}, bounds of the same strength are not expected to be true, partly due to the fact that, even for the linear wave solutions, there is an infinite dimensional subspace of $\dot{H}^1 \times L^2$ for which \eqref{000} fails for $A>0$. In odd space dimensions, there is a lower bound for linear wave solutions in the exterior of the double light cone $|x| >  |t|$, \emph{i.e.},  with $A = 0$ in \eqref{000}, see \emph{e.g.}, \cite[Proposition 2.7]{DKM12JEMS}.  Note that it is difficult to extend this property to the nonlinear setting using perturbative arguments, unless there are smallness assumptions. In \cite{DKMCMP}, further results are proved for the linear wave equation with a potential obtained by linearizing \eqref{202} at the ground state $W$, still in odd space dimensions. For more results on channels of energy for linear waves, we refer to the recent work \cite{cote}.

The channel of energy estimates are closely related to the \emph{rigidity conjecture} for solutions satisfying the compactness property. Equation \eqref{202} admits infinitely many stationary solutions $Q \in \dot{H}^1(\mathbb{R}^d)$, that is, finite-energy solutions to
$$ - \Delta Q =  |Q|^{\frac{4}{d-2}} Q.$$
Taking the Lorentz transform of such a steady-state, one obtains travelling wave solutions (or solitons) to \eqref{202} for arbitrary $l \in \mathbb{R}^d$ with $|{l}| < 1$:
\begin{equation} \label{Qldefinition}
 Q_l(t,x) = Q\left(\mathbf{P}_2 x + \frac{1}{\sqrt{1-|l|^2}} \, \mathbf{P}_1 (x-lt)\right),
\end{equation}
with
\begin{equation}
  \mathbf{P}_1 = \frac{l\otimes l}{|l|^2}, \quad \mathbf{P}_2 = \mathbf{I} - \mathbf{P}_1.
\end{equation}
We say that a solution satisties the compactness property, if there exist functions $\lambda(t) >0$, $x(t) \in \mathbb{R}^d$, defined for $t \in (T^-(u), T^+(u))$, such that the set
\begin{align} 
K = &\Big\{\left(\lambda(t)^{\frac{d}{2}-1} u(t, \lambda(t) \cdot + x(t)), \lambda(t)^{\frac{d}{2}} u_t(t, \lambda(t) \cdot + x(t)\right), \nonumber \\
& \quad \quad \quad \quad \quad \quad \quad \quad \quad \quad \quad \quad \quad \quad \quad \quad t \in \left(T^-(u), T^+(u)\right)\Big\}
\end{align} 
is precompact in $\dot{H}^1 \times L^2$. Here $(T^-(u), T^+(u))$ is the maximal lifespan of $u$. The rigidity conjecture states that any solution to \eqref{202} with the compactness property is a travelling wave, see, \emph{e.g.}, \cite{DKMCPAA} and \cite{DKMCompact}.  It has been proved that if $u$ satisfies the compactness property, then $-T^-(u) = T^+(u) = +\infty$ (see Corollary \ref{thm4} and Remark \ref{rmk-cor}), and moreover, $u$ is \emph{nonradiative} in each time direction (see, \emph{e.g.}, \cite[Section 2]{DKMCPAA}): for any $A \in \mathbb{R}$,
\begin{equation}
\lim_{t \to \pm\infty} \int_{|x|>A+|t|} \left( |\partial_t u|^2 + |\nabla u|^2 \right) \, \mathrm{d}x = 0.
\end{equation}
In other words, for solutions with the compactness property, channel of energy estimates cannot hold. Clearly, travelling waves $Q_l$ satisfy the compactness property and are, in particular, nonradiative. In order to solve the rigidity conjecture, a natural strategy is to prove that any (global) solution to \eqref{202} which is not identical to a travelling wave is radiative, in the sense that \eqref{000} holds for some $A \in \mathbb{R}$.

We are now ready to present the main results. It turns out that \eqref{000} holds for the nonlinear problem \eqref{202} for an arbitrary $A<0$ if $u$ is a nontrivial global solution with  $(u_0, u_1)$ compactly supported. For one particular time direction only, \eqref{000} still holds for a sufficiently negative $A$. Moreover, the results are valid even with a soliton background in $d = 3$ and $d = 4$.  A direct consequence is that such solutions do not satisfy the compactness property. Note that no symmetry or size restrictions on the initial data are required here.

\begin{theorem}[Energy channel with compactly supported initial data] \label{thm2}
Under either of the assumptions
\begin{itemize}
\item[(i)] $3\le d \le 5$, $\emptyset \neq \mathrm{supp} (u_0, u_1) \subset \overline{B_R}$ for some $R > 0$.
\item[(ii)] $d = 3$ or $4$, $\emptyset \neq \mathrm{supp} (u_0 - Q_l(0, \cdot), u_1 - \partial_t Q_l(0, \cdot))  \subset \overline{B_R}$ for some $R > 0$ and a travelling wave solution $Q_l$.
\end{itemize}
the statements (A) and (B) on the solution $u$ to \eqref{202} are both true.
\begin{itemize}
\item[(A)] Suppose $- T^-(u) = T^+(u) = + \infty$, then at least one of the following holds:
\begin{itemize}
\item[(A1)] there exists $\eta_1(u) > 0$ such that for all $t\ge 0$ or all $t\le 0$,
$$\int_{|x|> |t|} \left( |\partial_t u|^2 + |\nabla u|^2  \right) \mathrm{d}x \ge \eta_1.$$
\item[(A2)] for any $\varepsilon > 0$, there exists $\eta_2(\varepsilon, u) > 0$ such that for all $t \in \mathbb{R}$,
$$\int_{|x|> -\varepsilon +|t|} \left( |\partial_t u|^2 + |\nabla u|^2  \right) \mathrm{d}x \ge \eta_2.$$
\end{itemize}
\item[(B)] Suppose $T^+(u) = + \infty$, then there exists $\eta_3(u) > 0$ such that for all $t \ge 0$,
$$\int_{|x|> - R +t} \left( |\partial_t u|^2 + |\nabla u|^2 \right) \mathrm{d}x \ge \eta_3.$$
\end{itemize}
\end{theorem}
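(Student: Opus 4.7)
The plan for Theorem \ref{thm2} is to combine finite speed of propagation with a profile-decomposition / contradiction argument, leveraging the linear channel-of-energy estimates cited above.

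First, I would establish finite speed of propagation. In case (i), standard energy estimates for \eqref{202} give $\mathrm{supp}(u(t), \partial_t u(t)) \subset \overline{B_{R+|t|}}$. In case (ii), applying finite speed of propagation to $u - Q_l$, which solves a wave equation with locally Lipschitz source, yields $\mathrm{supp}((u - Q_l)(t), \partial_t(u - Q_l)(t)) \subset \overline{B_{R+|t|}}$. In either case the nontrivial part of the solution is confined to an expanding ball of radius $R + |t|$, and the ``exterior energy'' in the theorem becomes an integral over an annulus bordering the light cone.

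I would prove (B) first, by contradiction. Suppose there is a sequence $t_n \to +\infty$ such that the exterior energy on $\{|x| > -R + t_n\}$ tends to zero. Finite speed of propagation then places essentially all of the kinetic and gradient energy of the perturbation strictly inside $B_{t_n - R}$, i.e. away from the light cone. Applying a Bahouri--G\'erard profile decomposition to $(u(t_n), \partial_t u(t_n))$, or to the perturbation in case (ii), one evolves each profile by the linear (resp.\ linearized around $Q_l$) wave flow. The linear channel-of-energy estimates of \cite{DKM12JEMS, DKMCMP} together with \cite{cote} in $d=4$ force each nontrivial profile to contribute a positive lower bound to the exterior energy, unless it lies in a specific finite-dimensional obstructing subspace. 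The compact support of the data then rules out the profile lying entirely in the obstructing subspace, producing the contradiction.

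For (A), note that applying (B) in both time directions (using $T^\pm = \infty$) immediately yields (A2) with $\varepsilon = R$. To obtain (A2) for $0 < \varepsilon < R$ or, alternatively, to produce (A1), I would run the same profile analysis on the narrower cone-annulus $\{-\varepsilon + |t| < |x| \le R + |t|\}$. If both the forward and backward versions of (A1) fail, then asymptotically no energy reaches the exterior of the double light cone; combined with the profile analysis, this forces the solution to lie on the orbit of a soliton under the symmetry group, which is incompatible with the compactly supported initial perturbation.

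The main obstacle is twofold. In dimension 4, Huygens' principle fails, so the relevant linear channel-of-energy estimate is subtler and one must import or adapt the approach of \cite{cote}. In case (ii) the linearization of the nonlinearity at $Q_l$ introduces a Lorentz-boosted potential, for which channel-of-energy estimates analogous to those of \cite{DKMCMP} (valid around $W$) must be extended via a Lorentz transform, keeping careful track of how the relevant light cones transform under the boost. These two extensions are where the heart of the argument lies.
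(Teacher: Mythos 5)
Your proposal takes a genuinely different route from the paper, but it has gaps that I do not think can be repaired. The paper's actual proof uses no profile decomposition and no linear channel-of-energy estimates at all: it performs the conformal inversion $t = s/(|y|^2-s^2)$, $x = y/(|y|^2-s^2)$, which maps null infinity to the cone $\{|y|=|s|\}$, shows that the decay of the exterior energy forces the transformed solution $v$ to extend regularly across that cone and to vanish inside the forward light cone, and then propagates this vanishing by Carleman-estimate unique continuation (for $\Box+V$ with $V\in L^{(d+1)/2}$) across foliations of strongly pseudoconvex hyperboloids, until $w=u-Q_l$ is forced to vanish identically, contradicting nontriviality. This is why the constants $\eta_{1,2,3}$ carry no quantitative relation to the energy: the theorem is a rigidity statement, not a coercivity estimate.

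The concrete obstructions to your plan are the following. First, the linear exterior-energy estimates you want to invoke are simply not available in the generality you need: as the paper recalls from \cite{DJKM}, for nonradial linear waves there is an \emph{infinite}-dimensional (not finite-dimensional) subspace of $\dot H^1\times L^2$ on which the exterior estimate fails for every $A>0$, and in the even dimension $d=4$ even the $A=0$ estimate fails for a large class of data. Second, and more fundamentally, your mechanism for excluding the obstructing subspace --- ``the compact support of the data rules out the profile lying entirely in the obstructing subspace'' --- has no justification: profiles extracted by Bahouri--G\'erard at times $t_n\to+\infty$ are weak limits of rescaled translates and retain no memory whatsoever of the compact support of the data at $t=0$; they can be arbitrary elements of $\dot H^1\times L^2$. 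Third, even granting linear lower bounds on individual profiles, transferring them to the focusing energy-critical flow is a perturbative step that requires smallness; the paper explicitly notes that such perturbative arguments do not extend the linear channel estimates to the nonlinear setting here. Your observation that (B) in both time directions yields (A2) for $\varepsilon=R$ is correct but far from (A), which needs every $\varepsilon>0$ and the borderline case $\varepsilon=0$ in one time direction; the paper handles these by a time-translation and continuous-induction argument combined with unique continuation across the cylinder $\{|x|=R\}$, not by any asymptotic soliton classification.
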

%
%

\begin{remark}
Unlike the existing results in the radial setting \cite{DKM} or in the linear setting \cite[Proposition 2.7]{DKM12JEMS}, the numbers $\eta_{1,2,3}$ in Theorem \ref{thm2} may be arbitrarily small with respect to the energy of the initial data. 
\end{remark}

\begin{remark}
In our proof, the main obstruction for extending the result to $A \ge 0$ is H\"ormander's geometric pseudoconvexity condition in unique continuation theory. Under the assumption (i), if for some $A \ge 0$,
$$\lim_{t \to + \infty}\int_{|x|> A + t} \left( |\partial_t u|^2 + |\nabla u|^2  \right) \mathrm{d}x = 0,$$ 
then we can prove the vanishing of $u$ in
$$\{ (t,x): |x|^2 - (t+A)^2 > R^2 - A^2, \ t \ge 0 \}.$$
On the other hand, for linear waves, using Holmgren's unique continuation theorem (see \cite[Theorem 1.6]{LBook}) instead of H\"ormander's, we can obtain a larger vanishing region
$$\left\{ (t,x): |x|  >  \left| t-\frac{R-A}{2} \right| + \frac{R+A}{2}, \ t \ge 0 \right\}.$$
\end{remark}

\begin{remark}
We do not assume the uniform boundedness of the $\dot{H}^1 \times L^2$ norm for $u$ in the above results.
\end{remark}

Next, we give an application of the above channel of energy properties to the rigidity conjecture mentioned earlier.

\begin{corollary} \label{thm4} 
For $3 \le d \le 5$, let $u$ be a solution to \eqref{202} with the compactness property. Then $(T^-, T^+) = (-\infty, +\infty)$. Moreover, $u$ cannot satisfy either of the assumptions (i) and (ii) from Theorem \ref{thm2}.
\end{corollary}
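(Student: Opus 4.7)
The statement combines two claims — that any compactness-property solution $u$ is global in time, and that it cannot additionally satisfy either (i) or (ii) of Theorem~\ref{thm2}. I would tackle the second assertion first, since, granted global existence, it follows quickly from Theorem~\ref{thm2} together with a classical fact.

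Suppose $u$ is a global compactness-property solution. It is well known (see, e.g., \cite[Section 2]{DKMCPAA}) that $u$ is then nonradiative in each time direction:
\[
\lim_{t \to \pm \infty} \int_{|x|>A+|t|} \left( |\partial_t u|^2 + |\nabla u|^2 \right) \d x = 0 \quad \text{for every } A \in \mathbb{R}.
\]
If $u$ also satisfied (i) or (ii), Theorem~\ref{thm2}(A) would force one of two alternatives: (A1) gives a positive lower bound on $\int_{|x|>|t|}(\cdots)\d x$ uniform in $t \ge 0$ or in $t \le 0$, while (A2) gives, for every $\varepsilon>0$, a positive lower bound on $\int_{|x|>-\varepsilon+|t|}(\cdots)\d x$ uniform in $t \in \mathbb{R}$. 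Taking $A=0$ and $A=-\varepsilon$ respectively in the vanishing limit above yields immediate contradictions.

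For the global existence, I would proceed by contradiction, assuming $T^+(u)<\infty$ (the case $T^-(u)>-\infty$ being symmetric). Precompactness of $K$ uniformly bounds $\|(u,\partial_t u)\|_{\dot H^1 \times L^2}$, which forces Type~II blowup: $\lambda(t)\to 0$ as $t \to T^+$. Picking $t_n \to T^+$ and extracting a subsequence, the rescaled states
\[
\left(\lambda(t_n)^{\tfrac{d}{2}-1} u(t_n, \lambda(t_n)\,\cdot\, + x(t_n)),\; \lambda(t_n)^{\tfrac{d}{2}} \partial_t u(t_n, \lambda(t_n)\,\cdot\, + x(t_n))\right)
\]
converge in $\dot H^1 \times L^2$ to a nontrivial $(V_0,V_1)$ (norm-preserving rescaling prevents collapse), and the corresponding NLW solution $V$ is itself a compactness-property solution, defined on all of $(-\infty, T^+(V))$ after the lifespan rescales. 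The plan is then to apply Theorem~\ref{thm2}(B) in reverse time to $V$ and deduce a lower bound on its exterior energy incompatible with the nonradiativity of $V$ at $t \to -\infty$. The main obstacle is that $V$ will generically \emph{not} inherit compactly supported data (nor a compactly supported perturbation of a soliton) from $u$, so Theorem~\ref{thm2} does not apply to $V$ directly; an additional approximation, truncation, or comparison argument is needed to bring $V$ into the scope of the theorem, and this is the crux of the proof.
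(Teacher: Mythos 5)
The second assertion is handled correctly: granted globality, nonradiativity of compactness-property solutions plus Theorem~\ref{thm2} gives an immediate contradiction (the paper uses statement (B), you use (A); both work for a doubly global solution). The problem is the global existence part, where you correctly identify that your plan does not close and the profile $V$ extracted at the blowup time will not inherit compactly supported data. The missing idea is that no profile extraction is needed at all: it is a known consequence of the compactness property (equation (2.13) in \cite{DKMCPAA}) that if $T^+<\infty$ then $u$ is supported in the backward light cone $\{|x-x^+|<T^+-t\}$ emanating from the unique blowup point $(T^+,x^+)$. Hence $(u_0,u_1)$ is \emph{automatically} compactly supported, i.e., $u$ itself satisfies assumption (i), and if $T^-=-\infty$ one applies Theorem~\ref{thm2}(B) directly to the time-reversed solution and contradicts nonradiativity as $t\to-\infty$. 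This replaces your entire approximation/truncation step.

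You also do not address the remaining case where both $T^-$ and $T^+$ are finite, for which the backward-light-cone argument in one time direction is not available. The paper handles this separately: the compactness property confines $\mathrm{supp}(u,\partial_t u)$ to a compact spacetime region, and a unique continuation argument across foliations of large vertical cylinders $\{|x-N\omega|=N-a\}$ (which are strongly pseudoconvex approximations of vertical hyperplanes) forces $u\equiv 0$ on the slab $[T^-,T^+]\times\mathbb{R}^d$. Alternatively one may cite \cite[Proposition 2.1]{DKMCPAA}. Without this case and without the backward-cone support fact, your proof of globality is incomplete.
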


\begin{remark} \label{rmk-cor}
This global existence statement was first obtained in \cite{DKMHP}. We provide a different proof for this result, based on Theorem \ref{thm2}. Solutions to \eqref{202} under either of the assumptions (i) and (ii) from Theorem \ref{thm2} cannot satisfy the compactness property, and moreover, if they are global in one time direction with uniformly bounded $\dot{H}^1 \times L^2$ norm, the scattering profile constructed in \cite{jems16} must be nontrivial.
\end{remark}

Our main tools are the conformal inversion for the Minkowski spacetime which brings the null infinities to the light cone $\{|y| = |s|\}$ (see Section \ref{sec3}), and the unique continuation theory established in \cite{KT} and \cite{Dos}. Unique continuation from infinity results have appeared in the works \cite{Shao1, Shao2}. The method in \cite{Shao2} establishes uniqueness results for the linear operator $\Box + V$ assuming global pointwise bounds on  smooth solutions and the potential $V$ in the exterior region $|x| > |t|$. The focusing superconformal equation \eqref{202} is not covered by the analysis there, see \cite[Section 1.1.2]{Shao2}. In \cite{Shao1}, curved spacetime is treated and vanishing to infinite order at the null infinities are required. Our new observation here is that, the vanishing of energy in the null infinity and the critical regularity in the current setting are just sufficient for the application of continuation from infinity arguments. In particular, we reduce the problem to an ill-posed characteristic problem studied in, \emph{e.g.}, \cite[Theorem 1.1]{IK} and \cite{Ler}, where vanishing data are given on two transversal characteristic surfaces for the wave operator $\Box = \partial_t^2 - \Delta_x$. 

In Section \ref{sec2}, we recall some known facts on Carleman estimates and unique continuation, as well as localized well-posedness results for nonlinear wave equations with variable coefficients. In Section \ref{sec3}, we apply the conformal inversion transform to \eqref{202}. In Section \ref{sec4}, we prove regularity and vanishing properties for a transformed solution assuming the decay of energy  for the original solution in the null directions. The proofs will be concluded in Section \ref{sec5}.

\section{Preliminaries} \label{sec2}

\subsection{Carleman estimates and unique continuation}

One of the key tools in our analysis is the unique continuation theory for the wave operator with a critical potential. To recall the key ideas, we consider an oriented hypersurface in $\mathbb{R}^n$ defined by a smooth function $\phi$, $\Gamma = \{ \phi = 0\}$. Denote the two sides of $\Gamma$ by $\Gamma^+ = \{\phi>0\}$ and $\Gamma^- = \{\phi < 0\}$. 

\begin{definition}
We say that the unique continuation property across the hypersurface $\Gamma$ holds for a differential operator $P(x,D)$ if for each $x_0 \in \Gamma$ there exists a neighbourhood $V$ of $x_0$ such that: if $u$ is a solution to $P(x,D)u = 0$ in $V$ with $u = 0$ in $\Gamma^+ \cap V$, then $u = 0$ near $x_0$.
\end{definition}

Unique continuation for the wave operator requires a geometric \emph{pseudoconvexity} condition on the hypersurface $\Gamma$ (or equivalently on $\phi$).The full definition of the pseudoconvexity condition for general operators can be found in \cite[Definition 8.2]{KT} or \cite[Definition 4.7]{LBook}. For the operator $\Box$, the definition is simpler if restricted to the noncharacteristic case. 

\begin{definition} \label{def-pc}
We say that the oriented hypersurface $\Gamma$ (with $\Gamma^+$ understood as above), or equivalently the defining function $\phi$, is noncharacteristic and strongly pseudoconvex at point $(t,x) \in \Gamma$ with respect to $\Box$, if both statements below are satisfied in a $\Gamma$-neighbourhood of $(t,x)$.
\begin{enumerate}
\item $- (\partial_t \phi)^2 + |\nabla_x \phi|^2 \neq 0,$
\item For any $ (p, \xi) \in TV$ with $\xi$ null, $ \partial_\xi^2 \phi|_{p} = \xi^i\xi^j\partial_i\partial_j \phi|_{p}> 0$.
\end{enumerate}
\end{definition}

Note that the standard timelike hyperboloids $\Gamma$ defined by 
$$\phi_0 = |x|^2 - t^2 - a^2 = 0$$
with either orientations rest on the borderline of pseudoconvexity. For our purpose, three kinds of strongly pseudoconvex surfaces will be used. For each $\phi_k$ below, $\Gamma = \{\phi_k = 0\}$ with $\Gamma^+ = \{\phi_k > 0\}$ satisfies Definition \ref{def-pc} at every point $p \in \Gamma$.
\begin{enumerate}
\item  $\phi_1 = (|x|+\delta)^2 - t^2 - a^2$, $ 0 < \delta < a$.
\item  $\phi_2 = -(|x|-\delta)^2 + t^2 + a^2$, $\delta > 0$, in $|x| > \delta$.
\item  $\phi_3 = |x| - a$, $a>0$.
\end{enumerate}

Next we state an important unique continuation theorem, which generalizes the classical results of H\"ormander \cite{Hormander}, and Kenig, Ruiz and Sogge \cite{KRS}.  

\begin{lemma} \label{lem8}
Let $\phi$ be a noncharacteristic and strongly pseudoconvex function with respect to $\Box$ in $\mathbb{R}^{d+1}$. Then there exists $\tau_0 > 0$ such that, for compactly supported $u$ and $\tau > \tau_0$, we have
\begin{equation} \label{carleman}
\|e^{\tau \phi} u\|_{L_{t,x}^{\frac{2(d+1)}{d-1}}} \le C_d \|e^{\tau \phi} \Box u\|_{L_{t,x}^{\frac{2(d+1)}{d+3}}},
\end{equation} 
where $C_d$ is independent of $\tau$. As a consequence, unique continuation for (sufficiently regular) solutions $u$ to $(\Box + V) u = 0$ across smooth noncharacteristic and strongly pseudoconvex hypersurfaces holds for potentials $V \in L_{t,x}^{\frac{d+1}{2}}$.
\end{lemma}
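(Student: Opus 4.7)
The plan is to prove the weighted Strichartz bound first by conjugation, then extract unique continuation by Hölder absorption of the potential. Setting $v = e^{\tau \phi} u$, the inequality is equivalent to the uniform bound $\|v\|_{L^{2(d+1)/(d-1)}} \le C \|P_\tau v\|_{L^{2(d+1)/(d+3)}}$, where the conjugated operator is
$$P_\tau v = \Box v - 2\tau\bigl(\partial_t\phi\,\partial_t v - \nabla\phi\cdot\nabla v\bigr) + \tau^2\bigl((\partial_t\phi)^2 - |\nabla\phi|^2\bigr) v - \tau(\Box\phi)\, v.$$
Crucially, the Hölder-dual pair $\bigl(2(d+1)/(d+3),\, 2(d+1)/(d-1)\bigr)$ is exactly the endpoint Strichartz pair for $\Box^{-1}$ on Minkowski space (Keel--Tao), so at $\tau = 0$ the target bound reduces to the flat Strichartz estimate. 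The task is to show that the lower-order $\tau$-dependent terms do not destroy this bound.

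For a general $\tau$, I would localize near a reference point $p_0 \in \Gamma$, put $\phi$ into normal form, and construct a parametrix for $P_\tau$ uniformly in $\tau$. The principal symbol $-\xi_0^2 + |\xi|^2 + \tau^2\bigl((\partial_t\phi)^2 - |\nabla\phi|^2\bigr) + 2i\tau(\nabla\phi\cdot\xi - \partial_t\phi\,\xi_0)$ has vanishing imaginary part along bicharacteristics through the characteristic set, while Definition~\ref{def-pc} forces positivity of the relevant Poisson bracket at null covectors; a G\aa rding-type argument then yields the positive commutator needed to invert $P_\tau$. Promoting this positivity from an $L^2$ estimate to the $L^p$--$L^q$ endpoint is the delicate part, and I would invoke the parametrix machinery of Koch--Tataru \cite{KT} and Dos Santos Ferreira \cite{Dos}, which combines a WKB construction with dispersive bounds that survive uniformly in $\tau$. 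The compact support assumption on $u$ allows us to patch the local estimates together via a partition of unity.

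Once the Carleman estimate is in hand, unique continuation follows from the standard scheme. Given $(\Box + V)u = 0$ with $u \equiv 0$ on $\Gamma^+ \cap V$, I would choose a cutoff $\chi$ supported in a small two-sided neighborhood of $x_0$ but equal to one on an even smaller one, apply \eqref{carleman} to $\chi u$, and split $\Box(\chi u) = \chi V u + [\Box,\chi] u$. The commutator term is supported where $\phi$ is bounded above by some $\phi_1 < \phi(x_0)$, so the weight $e^{\tau\phi}$ on that region is exponentially smaller than $e^{\tau \phi(x_0)}$ and this contribution vanishes as $\tau \to \infty$. For the main term, H\"older's inequality gives
$$\|e^{\tau\phi} V \chi u\|_{L^{2(d+1)/(d+3)}} \le \|V\|_{L^{(d+1)/2}(\operatorname{supp}\chi)} \,\|e^{\tau\phi}\chi u\|_{L^{2(d+1)/(d-1)}},$$
and by shrinking the support of $\chi$ the $V$-factor can be made smaller than $1/(2C_d)$, so this term is absorbed into the left-hand side of \eqref{carleman}. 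The remaining inequality, on letting $\tau \to \infty$, forces $\chi u$ to vanish on a neighborhood of $x_0$ on the $\Gamma^-$ side as well. The main obstacle is purely in step one: the uniform endpoint Strichartz bound for $P_\tau$ is considerably finer than the $L^2$ Carleman estimates of H\"ormander, and cleanly separating the geometric (pseudoconvexity) from the harmonic-analytic (Keel--Tao scaling) content is where one has to rely on \cite{KT,Dos} rather than reprove things from scratch.
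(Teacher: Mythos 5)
Your proposal follows essentially the same route as the paper: the Carleman estimate \eqref{carleman} itself is not reproved but deferred to \cite[Theorems 8.14 and 8.15]{KT} and \cite{Dos} (after explaining the conjugation and the role of pseudoconvexity), and the unique continuation consequence is derived by the standard cutoff--commutator--H\"older absorption scheme, which is exactly what the paper indicates via \cite[Theorem 9.24]{LBook}, including the observation that the commutator term $[\Box,\chi]u$ is what imposes the regularity requirement $\partial_{t,x}u\in L^{2(d+1)/(d+3)}_{\mathrm{loc}}$. The only quibble is that the pair $\left(\tfrac{2(d+1)}{d+3},\tfrac{2(d+1)}{d-1}\right)$ is the classical self-dual Strichartz/Kenig--Ruiz--Sogge pair rather than the Keel--Tao endpoint, but this does not affect the argument.
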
 
The proof of this theorem is quite involved, see \cite[Theorems 8.14 and 8.15]{KT} or \cite{Dos}. The minimal regularity requirement on $u$ is $\Box u \in L^\frac{2(d+1)}{d+3}_{\loc}$, $u \in L^{\frac{2(d+1)}{d-1}}_\loc$, and  $\partial_{t,x} u \in L^\frac{2(d+1)}{d+3}_{\loc}$. The last condition shows up because, in the proof of unique continuation,  \eqref{carleman} must be applied to $\chi u$ where $\chi$ is a suitable smooth cutoff function. Hence, one needs
\begin{equation}
\Box(\chi u) = \chi \Box u + 2 \partial_t \chi \partial_t u -2 \nabla_x \chi \cdot \nabla_x u + u \Box \chi \in L^\frac{2(d+1)}{d+3}.
\end{equation}
See, \emph{e.g.}, \cite[Theorem 9.24]{LBook} for the standard arguments.

\subsection{Localized well-posedness for NLW with variable coefficients}

It is well-known that the NLW equation \eqref{202} is locally well-posed using Strichartz norms $\|u\|_{L_{t,x}^{\frac{2(d+1)}{d-2}}} + \|D^\frac12 u\|_{L_{t,x}^{\frac{2(d+1)}{d-1}}}$ or $\|u\|_{L_{t}^{\frac{d+2}{d-2}}L_x^{\frac{2(d+2)}{d-2}}}$, see, \emph{e.g.}, \cite{K}. Using the universal extension operator for Sobolev spaces (see \cite[Section 7.69]{AF}) and the finite speed of propagation for linear wave equation, this result can be localized in a truncated backward light cone $\mathcal{K}_{T,T_0} : = \{(t,x): 0\le t\le T_0, |x| \le T-t\}$ with $T_0<T$. 

	\begin{lemma} \label{wp} Consider the following NLW equations:
		\begin{itemize}
			\item $3 \le d \le 5$, $\partial_t^2 u - \Delta u = F(t,x) \, |u|^{\frac{4}{d-2}} u$.
			
			\item $d=3$, $\partial_t^2 u - \Delta u = \sum_{j = 1}^5 F^{(3)}_j(t,x) u^j$.
			
			\item $d=4$, $\partial_t^2 u - \Delta u = \sum_{j = 1}^3 F^{(4)}_j(t,x) u^j$.
%
			
		\end{itemize}
with initial data in the ball,
$$u|_{t = 0} = u_0 \in H^1(B_T(0)), \quad \partial_t u|_{t=0} = u_1 \in L^2(B_T(0)).$$
We assume that all coefficients involved, \textit{i.e.}, $F, F_j^{(3)}, F_j^{(4)}$ are uniformly bounded and continuous in spacetime. Then we have the following well-posedness results localized in $\mathcal{K}_{T,T_0}$:
\begin{itemize}
\item (Local well-posedness) There exists a unique maximal solution in $\mathcal{K}_{T,T_0}$ satisfying $(u, \partial_t u) \in C([0,T^+), H^1\times L^2(B_{T-t}))$, $T^+ \in (0, T_0]$, and for any $T' < T^+$,
$$u\in L_{t}^{\frac{d+2}{d-2}}L_x^{\frac{2(d+2)}{d-2}}(\mathcal{K}_{T,T'}).$$ 
Interpolation also gives $u \in L_{t,x}^{\frac{2(d+1)}{d-2}}(\mathcal{K}_{T,T'})$.

\item (Small-data well-posedness) There exists a constant $\delta$ depending on $T$, $T_0$ and the $C^0$-norm of the coefficients such that if $\|(u_0, u_1)\|_{H^1 \times L^2(B_T)} < \delta$, then in the above result, $T^+ = T_0$.

\item (Blow-up/regularity criterion) Suppose that the coefficients $F, F_j^{(3)}, F_j^{(4)}$ are $\frac12$-H\"older continuous in $\mathbb{R}^d$ uniformly in time. If the solution blows up before $T_0$, i.e., $T^+ < T_0$, then
$$\|u\|_{L_{t,x}^{\frac{2(d+1)}{d-2}}(K_{T,T^+})} = + \infty.$$
\end{itemize}

\end{lemma}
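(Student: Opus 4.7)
The plan is to reduce the localized Cauchy problem in $\mathcal{K}_{T,T_0}$ to a global one on $\mathbb{R}^{1+d}$ via extension, solve it by a standard Strichartz-based contraction argument, and then invoke finite speed of propagation to restrict to $\mathcal{K}_{T,T_0}$. First, I would use the universal Sobolev extension operator from \cite[Section 7.69]{AF} to extend $(u_0, u_1)$ from $B_T(0)$ to elements of $\dot{H}^1(\mathbb{R}^d)\times L^2(\mathbb{R}^d)$; the coefficients $F$, $F_j^{(3)}$, $F_j^{(4)}$ are already bounded on all of spacetime by assumption. The extended equation is a mild perturbation of the classical energy-critical NLW, so the local theory of \cite{K} adapts. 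Finite speed of propagation ensures that the restriction of the resulting solution to $\mathcal{K}_{T,T_0}$ depends only on data in $B_T(0)$ and on coefficients inside $\mathcal{K}_{T,T_0}$, and is therefore independent of the extension.

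For the contraction argument, I would combine the sharp Strichartz inequality
\begin{equation*}
\|u\|_{L_{t,x}^{\frac{2(d+1)}{d-2}}} + \|D^{1/2}u\|_{L_{t,x}^{\frac{2(d+1)}{d-1}}} \le C\Big(\|(u_0,u_1)\|_{\dot{H}^1\times L^2} + \|\Box u\|_{L_{t,x}^{\frac{2(d+1)}{d+3}}}\Big)
\end{equation*}
with the standard Lipschitz bound for the nonlinearity $|u|^{4/(d-2)}u$ in these norms, using that $F$ is bounded in $L^\infty$. On a subinterval where the free evolution's Strichartz norm is small, the map sending $v$ to the Duhamel solution of $\Box u = F|v|^{\frac{4}{d-2}} v$ is a contraction on a small ball in the critical Strichartz space. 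Partitioning $[0,T_0]$ into finitely many such subintervals produces the maximal solution and the stated regularity. For $d=3,4$, the subcritical polynomial terms $F_j^{(d)} u^j$ with $j$ strictly less than the critical power are controlled by interpolating the Strichartz norm with the energy bound from $C_t H^1$, and they only improve the contraction estimate on the bounded interval $[0,T_0]$. The small-data statement then follows because, if $\|(u_0,u_1)\|_{\dot{H}^1\times L^2} < \delta$ for $\delta$ sufficiently small in terms of $T_0$ and the $L^\infty$ norms of the coefficients, the free evolution is already small in the relevant Strichartz norms uniformly on $[0,T_0]$, so the contraction closes in one step.

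For the blow-up/regularity criterion, I would follow the textbook route: assuming $\|u\|_{L_{t,x}^{2(d+1)/(d-2)}(\mathcal{K}_{T,T^+})}<\infty$ with $T^+ < T_0$, partition $[0,T^+)$ into finitely many subintervals on each of which this norm is arbitrarily small, then use the Strichartz estimate on each subinterval to obtain uniform control of $(u,\partial_tu)$ in $C_t(H^1\times L^2)$ up to $t=T^+$. Restarting the local theory at a slice $t=T^+ - \varepsilon$ with $\varepsilon$ small provides an extension past $T^+$, contradicting maximality. The $\tfrac12$-Hölder assumption on the coefficients enters exactly at this last step: it ensures that the local existence time for the variable-coefficient problem at a generic initial slice admits a lower bound depending only on the $H^1\times L^2$ norm of the data, replacing the time-translation invariance one has in the constant-coefficient case.

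The main technical subtlety is the interaction between the lower-order subcritical terms in $d=3,4$ and the critical Strichartz scheme, and the uniform-in-time control of contraction constants when the coefficients vary in spacetime. For the standard local existence and small-data pieces the $L^\infty$ bound on the coefficients suffices; it is only in the final blow-up criterion that I need the $\tfrac12$-Hölder regularity to propagate uniform local existence times. Modulo these bookkeeping points, the proof is a direct adaptation of the well-known energy-critical NLW local theory.
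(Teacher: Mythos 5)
Your overall strategy --- Sobolev extension of the data from the ball, the standard critical Strichartz contraction, and finite speed of propagation to localize to $\mathcal{K}_{T,T_0}$ --- is the same as the paper's; the paper merely packages the extension step as two localized Strichartz estimates on the truncated cone and then quotes the standard Cauchy theory. (One bookkeeping point: to obtain the \emph{maximal} solution in the cone you must re-extend from $B_{T-t}$ at later slices rather than solve the once-extended problem globally, since the extended solution may blow up before the cone-localized one; your partition into subintervals implicitly does this.)

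There are, however, two concrete problems in your treatment of the blow-up criterion. First, the Strichartz inequality you display is dimensionally inconsistent: pairing $\|D^{1/2}u\|_{L^{2(d+1)/(d-1)}_{t,x}}$ on the left with $\|\Box u\|_{L^{2(d+1)/(d+3)}_{t,x}}$ (no derivative) on the right mismatches by half a derivative under the critical scaling. The correct estimates are the two the paper records: either $\|\mathcal{F}\|_{L^1_tL^2_x}$ on the right with no derivatives anywhere, or $\|\mathcal{F}\|_{L_t^{2(d+1)/(d+3)}W_x^{1/2,2(d+1)/(d+3)}}$ on the right matching the $W^{1/2}$ norms on the left. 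Second, and relatedly, you misattribute the role of the $\tfrac12$-H\"older hypothesis. It is not there to give a uniform lower bound on local existence times (no such bound in terms of the $H^1\times L^2$ norm exists for a critical problem, with or without variable coefficients; restarting at $T^+$ uses smallness of the free evolution's Strichartz norm on short intervals, which is unaffected by the coefficients). It is needed because the criterion is phrased in the diagonal norm $L_{t,x}^{2(d+1)/(d-2)}$: smallness of this norm on a subinterval controls the Duhamel term only through the companion bound on $\|D^{1/2}u\|_{L^{2(d+1)/(d-1)}}$, and closing that bootstrap requires estimating $D_x^{1/2}\bigl(F\,|u|^{4/(d-2)}u\bigr)$ (and $D_x^{1/2}(F_j u^j)$) via the fractional Leibniz rule, which demands half a derivative of regularity on the coefficients, i.e.\ $F\in C^{1/2}_x$ uniformly in $t$. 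By contrast, the first two bullets close the contraction in $L_t^{(d+2)/(d-2)}L_x^{2(d+2)/(d-2)}$ with the $L^1_tL^2_x$ dual estimate, which puts no derivatives on the coefficients --- this is exactly why mere boundedness and continuity suffice there. As written, your argument for the third bullet does not close; with the corrected estimate and the Leibniz-rule use of the H\"older hypothesis, it does.
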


The results are formulated using inhomogeneous Sobolev norms for the sake of Sobolev extension. The proof is based on the following localized Strichartz estimates for linear wave equation $\Box u = \mathcal{F}$ in $K_{T,T_0}$ which follows easily from the corresponding estimates on whole space, Sobolev extension and finite speed of propagation,
\begin{align}
& \quad \|(u, \partial_t u)\|_{C_t(H^1 \times L^2)(K_{T,T_0})}  + \|u\|_{L_{t}^{\frac{d+2}{d-2}}L_x^{\frac{2(d+2)}{d-2}}(K_{T,T_0})} \nonumber \\
&  \lesssim \|(u_0, u_1)\|_{H^1 \times L^2(B_T)} + \|\mathcal{F}\|_{L_t^1L_x^2(K_{T,T_0})},
\end{align}
and
\begin{align}
& \quad \|(u, \partial_t u)\|_{C_t(H^1 \times L^2)(K_{T,T_0})}  + \|u\|_{L_{t,x}^{\frac{2(d+1)}{d-2}}(K_{T,T_0})} \nonumber \\
& + \|u\|_{L_t^{\frac{2(d+1)}{d-1}}W_x^{\frac12,\frac{2(d+1)}{d-1}}(K_{T,T_0})} + \|u\|_{L_{t}^{\frac{d+2}{d-2}}L_x^{\frac{2(d+2)}{d-2}}(K_{T,T_0})} \nonumber \\
&  \lesssim \|(u_0, u_1)\|_{H^1 \times L^2(B_T)} + \|\mathcal{F}\|_{L_t^{\frac{2(d+1)}{d+3}}W_x^{\frac12,\frac{2(d+1)}{d+3}}(K_{T,T_0})}.
\end{align}
The rest of the proof follows the same arguments for the Cauchy problem, see \cite[Chapter 1]{K} for details.

 It is interesting to apply the more sophisticated methods in  \cite{Bulut} to extend our results to $d = 6$, which we do not pursue here. In the proof of Theorem \ref{thm2}, the main obstruction for extending to high dimensions $d>7$ is the low regularity of the coefficients in a transformed equation, see \eqref{30300} below.

\section{Conformal inversion} \label{sec3}

In order to obtain unique continuation from infinity results, we have to use the following coordinate change which is conformal with respect to the Minkowski metric. Under the notations of Theorem \ref{thm2}, define $w = u - Q_l$, where  $Q_l$ is set as $0$ if $u$ satisfies the assumption (i) from Theorem \ref{thm2}. By assumption, $\mathrm{supp} (w(0, \cdot), \partial_t w(0, \cdot)) \subset \overline{B_R}$ for some $R>0$. Since $Q_l$ is a solution to \eqref{202}, using the principle of finite propagation speed, $\mathrm{supp}\, w \subset \overline{B_{R+|t|}}$ for any $t \in (T^-(u), T^+(u))$. Suppose that $T^+(u) = + \infty$ and we shall work in the positive times direction $t \ge 0$. 

Using the coordinate change
\begin{equation} \label{3001}
t = \frac{s}{|y|^2 - s^2}, \quad  x = \frac{y}{|y|^2 - s^2},
\end{equation}
we define the transformed solution
\begin{equation} \label{3002}
v(s, y) = (|y|^2 -s^2)^{-\frac{d-1}{2}} w\left(\frac{s}{|y|^2 - s^2}, \frac{y}{|y|^2 - s^2}\right)
\end{equation}
in $|y| > |s|$. (This definition of $v$ in the region $|y| < |s|$ will not be used.) To see that the coordinate change $(t,x) \mapsto (s, y)$ is a conformal transformation, we set
\begin{equation}
\begin{cases}
\rho = |x|+t, \sigma = |x|-t, \\
\bar{\rho} = |y|+s, \bar{\sigma} = |y|-s.
\end{cases}
\end{equation}
It follows that, with $r = |x|, \bar{r} = |y|$,
\begin{equation}
\begin{cases}
\partial_\rho = \frac12(\partial_r + \partial_t), \ \partial_{\sigma} = \frac12(\partial_r - \partial_t), \\
\partial_{\bar{\rho}} = \frac12(\partial_{\bar{r}} + \partial_s), \ \partial_{\sigma} = \frac12(\partial_{\bar{r}} - \partial_s).
\end{cases}
\end{equation}
In terms of $\rho, \sigma$ variables, the above coordinate change is equivalent to $\bar{\rho} = \sigma^{-1}, \bar{\sigma} = \rho^{-1}$, and one can check that $\mathrm{d}t^2 - \mathrm{d}x^2 = \rho^2 \sigma^2 (\mathrm{d}s^2 - \mathrm{d}y^2)$. The null infinity in $(t,x)$ coordinates is mapped to the null cone $|y| = |s|$.

\begin{figure}[!h]
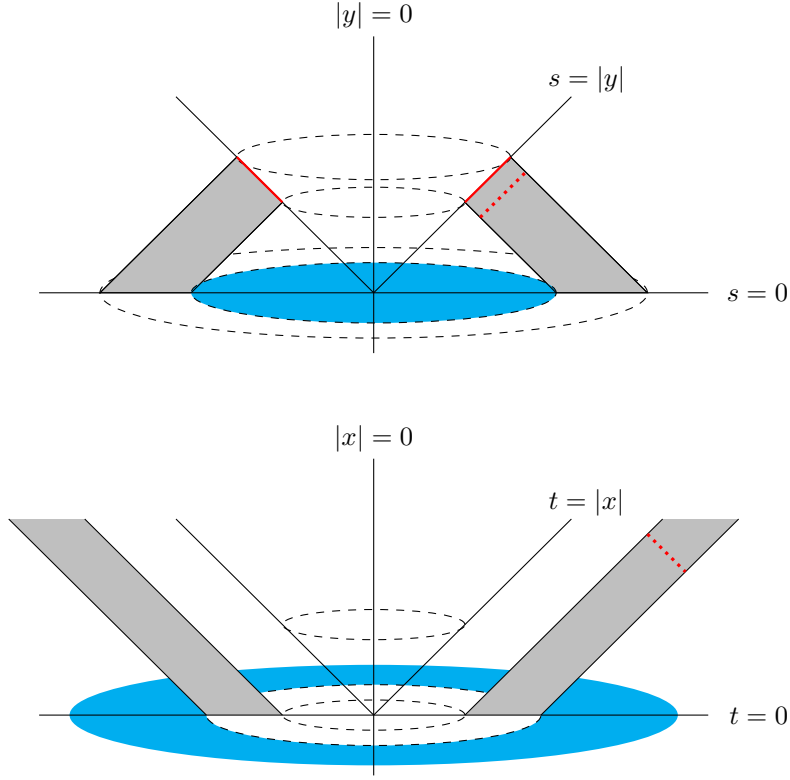

\centering
\tikz[scale = 2] {  

\fill [color = cyan]  (0,0-2.8) ellipse (2 and 0.3333);
\fill [color = white]   (0,0-2.8) ellipse (1.1 and 1.1*0.1833);

\draw [dashed] (0,0-2.8) ellipse (0.6 and 0.1);
\draw [dashed] (0,0-2.8) ellipse (1.1 and 1.1*0.1833);

\fill [color = lightgray] (0.6,0-2.8) -- (1.1, 0-2.8) -- (2.4, 1.3-2.8) -- (1.9, 1.3-2.8) -- (0.6,0-2.8);
\fill [color = lightgray] (-0.6,0-2.8) -- (-1.1, 0-2.8) -- (-2.4, 1.3-2.8) -- (-1.9, 1.3-2.8) -- (-0.6,0-2.8);

\draw (-2.2,0-2.8) -- (2.2,0-2.8);
\draw (0, -0.4-2.8) -- (0,1.7-2.8);
\draw (0, 0-2.8) -- (1.3, 1.3-2.8);
\draw (0, 0-2.8) -- (-1.3, 1.3-2.8);

\draw (0.6, 0-2.8) -- (1.9 , 1.3-2.8);
\draw (-0.6, 0-2.8) -- (-1.9, 1.3-2.8);

\draw (1.1, 0-2.8) -- (2.4 , 1.3-2.8);
\draw (-1.1, 0-2.8) -- (-2.4, 1.3-2.8);

\draw [dotted, color=red, very thick] (1.8,1.2-2.8)  -- (2.05,0.95-2.8);

\draw [dashed] (0,0.6-2.8) ellipse (0.6 and 0.1);


\node[label=above:{$t=|x|$}]  at (1.4,1.2-2.8) {};
\node[label=left:{$t = 0$}]  at (2.85, 0-2.8) {};
\node[label=right:{}]  at (-2.85, 0-2.8) {};
\node[label=below:{$|x| = 0$}]  at (0, 2.05-2.8) {};

\draw [dashed] (0,0.6) ellipse (0.6 and 0.1);

\draw [dashed] (0,0) ellipse (1.8 and 0.3);
\draw [fill = lightgray] (1.2,0) -- (0.6, 0.6) -- (0.9, 0.9) -- (1.8, 0) -- (1.2,0);
\draw [fill = lightgray] (-1.2,0) -- (-0.6, 0.6) -- (-0.9, 0.9) -- (-1.8, 0) -- (-1.2,0);

\draw [dashed, fill = cyan] (0,0) ellipse (1.2 and 0.2);

\draw (-2.2,0) -- (2.2,0);
\draw (0, -0.4) -- (0,1.7);
\draw (0, 0) -- (1.3, 1.3);
\draw (0, 0) -- (-1.3, 1.3);

\draw (1.2, 0) -- (0.6 , 0.6);
\draw (-1.2, 0) -- (-0.6, 0.6);

\draw (1.8, 0) -- (0.9 , 0.9);
\draw (-1.8, 0) -- (-0.9, 0.9);
\draw [dashed] (0,0.9) ellipse (0.9 and 0.15);

\draw [dotted, color=red, very thick] (0.7,0.5)  -- (1, 0.8);

\draw[red, line width=0.3mm] (0.6,0.6) -- (0.9,0.9);
\draw[red, line width=0.3mm] (-0.6, 0.6) -- (-0.9, 0.9);

\node[label=above:{$s = |y|$}]  at (1.4,1.2) {};
\node[label=left:{$s = 0$}]  at (2.85, 0) {};
\node[label=right:{}]  at (-2.85, 0) {};
\node[label=below:{$|y| = 0$}]  at (0, 2.05) {};
}

\caption{ Spacetime diagrams in $(s, y)$ and $(t, x)$ coordinates. The gray regions represent the null channel $\{R^{-1} - s \le |y| \le A^{-1} - s, |y| > s \ge 0\} \leftrightarrow \{A + t \le |x| \le R + t, t \ge 0\}$. The red solid lines in $(s,y)$ coordinates correspond to the null infinities along the channel. The initial data of $v$ and $w$ vanish in the corresponding blue domains.}
\end{figure}

To derive the equation for $v$, we compute
\begin{align*}
\square_{t,x} &= (\partial_t + \partial_r)(\partial_t - \partial_r) - \frac{d-1}{r}\partial_r - \frac{1}{r^2} \Delta_\omega \\
&= -4 \partial_\rho \partial_\sigma - \frac{2(d-1)}{\rho + \sigma} (\partial_\rho + \partial_\sigma)  - \frac{4}{(\rho + \sigma)^2} \Delta_\omega \\
&=-4\bar{\rho}^2 \bar{\sigma}^2 \partial_{\bar\sigma} \partial_{\bar\rho} + \frac{2(d-1) \bar\rho \bar\sigma}{\bar\rho + \bar\sigma} \left(\bar\sigma^2 \partial_{\bar\sigma} + \bar{\rho}^2 \partial_{\bar\rho}\right) - \frac{4\bar\rho^2\bar\sigma^2}{(\bar\rho+\bar\sigma)^2} \Delta_\omega \\
&=\bar{\rho}^2 \bar{\sigma}^2 \square_{s,y} + 2(d-1) \bar{\rho} \bar{\sigma} (\bar\rho \partial_{\bar\rho} + \bar\sigma \partial_{\bar\sigma}),
\end{align*}
where $\Delta_\omega$ is the Laplace operator in the angular variables. Also notice that
\begin{align*}
\square_{s,y} (\bar\rho \bar\sigma)^{\frac{d-1}{2}} = -2 (d-1)^2 (\bar\rho \bar\sigma)^{\frac{d-3}{2}}
\end{align*}
Hence,
\begin{align} \label{3005}
\square_{t,x} w  & = \square_{t,x} \left( (\bar\rho \bar\sigma)^{\frac{d-1}{2}} v \right) \nonumber \\
&= \bar{\rho}^\frac{d+3}{2} \bar{\sigma}^\frac{d+3}{2} \square_{s,y} v  -2 (d-1)^2 (\bar\rho \bar\sigma)^{\frac{d+1}{2}} v -2(d-1) (\bar{\rho}\bar{\sigma})^\frac{d+1}{2} (\bar\rho \partial_{\bar\rho} + \bar\sigma \partial_{\bar\sigma})  v  \nonumber \\
&\quad  + 2(d-1) \bar{\rho} \bar{\sigma} (\bar\rho \partial_{\bar\rho} + \bar\sigma \partial_{\bar\sigma}) \left( (\bar\rho \bar\sigma)^{\frac{d-1}{2}} v \right) \nonumber \\
&= \bar{\rho}^\frac{d+3}{2} \bar{\sigma}^\frac{d+3}{2} \square_{s,y} v.
\end{align}
Now consider the following three cases:


\smallskip

{\sc Case I}: $Q_l = 0$. In this case, $w$ is identical to $u$. Using \eqref{3005} and \eqref{202}, $v$ satisfies a wave equation with variable coefficients in the exterior region $|y| > |s|$,
\begin{equation} \label{30300}
\square_{s,y} v = (|y|^2 -s^2)^\frac{2}{d-2} |v|^{\frac{4}{d-2}} v.
\end{equation}
We consider the solution $\tilde{v}$  to the Cauchy problem
\begin{equation} \label{transeq}
\begin{cases}
\square_{s,y} \tilde{v} = (|y|^2 - s^2)_+^\frac{2}{d-2} |\tilde{v}|^{\frac{4}{d-2}} \tilde{v}, \\
\tilde{v}|_{s = 0} = v_0, \quad \partial_s \tilde{v}|_{s=0} = v_1,
\end{cases}
\end{equation}
with initial data given by $v$,
\begin{equation} \label{3008}
v_0(y) = |y|^{-d+1} u(0,\frac{y}{|y|^2}), \quad v_1(y) = |y|^{-d-1} \partial_t u(0,\frac{y}{|y|^2}).
\end{equation}
Here, $(|y|^2 - s^2)_+ = \mathbf{1}_{|y| > |s|} \cdot (|y|^2 - s^2)$. The transformed data $(v_0, v_1)$ vanish in $B_{R^{-1}}$ thanks to the compact support assumption of $w$, and moreover,
\begin{equation} \label{3009}
\|v_0\|_{\dot{H}_y^1} \le C_d R \|u(0, \cdot)\|_{\dot{H}_x^1}, \quad \|v_1\|_{L_y^2} \le C_d R \|\partial_t u(0, \cdot)\|_{L_x^2}.
\end{equation}
By Lemma \ref{wp}, problem \eqref{transeq} is well-posed locally in spacetime --- although the coefficient $(|y|^2 - s^2)_+^\frac{2}{d-2}$ is not uniformly bounded, the local existence holds in finite backward cones. Using the support of $(v_0, v_1)$ and finite speed of propagation, we have
\begin{equation} \label{3012}
\tilde{v} = 0, \quad \mathrm{in} \ \ |y| + |s| \le R^{-1}.
\end{equation}
Since the coordinate change is locally smooth for $|y| > |s|$ and 
\begin{equation}
u \in L_{t,x}^{\frac{2(d+1)}{d-2}}([0,T] \times \mathbb{R}^d),
\end{equation}
for any $T>0$, we deduce that
\begin{equation}  \label{3011}
v \in L_{s,y}^{\frac{2(d+1)}{d-2}}(|y| - \delta > s \ge 0, |y| + s \le \delta^{-1}),
\end{equation}
for any $\delta > 0$. By the finite speed of propagation and Lemma \ref{wp}\footnote{To be precise, we need to use a smooth approximation argument here. We mollify the initial data of $w$ to get approximate solutions $w_k$. Then the transformed solutions $v_k$ have smooth initial data approximating those of $v$. Local energy and Strichartz estimates for $v_k$ outside the light cone can be controlled uniformly due to the regularity criterion in Lemma \ref{wp}.}, we have
\begin{equation} \label{vv}
\tilde{v} = v, \quad \mathrm{in} \  |y| > s \ge 0,
\end{equation}
and  they are locally regular in $|y| > s \ge 0$ in the sense that for any $\delta > 0$,
\begin{align} \label{v-reg-ext} 
(v, \partial_t v) &\in C_s(\dot{H}_y^1 \times L_y^2)\left(|y| - \delta > s \ge 0, |y| + s \le \delta^{-1}\right),\\
\label{v-reg-ext2} v &\in L_s^{\frac{d+2}{d-2}}L_y^{\frac{2(d+2)}{d-2}}\left(|y| - \delta > s \ge 0, |y| + s \le \delta^{-1}\right).
\end{align}
\emph{In the sequel we do not distinguish between $\tilde{v}$ and $v$.} Due to the singularity in the coordinate change, we can not set $\delta = 0$ in the above results directly. In other words, the regularity of $v$ at the forward light cone with $|y| = s > (2R)^{-1}$ is not clear at this moment. Even though the coefficient in the nonlinearity degenerates at $|y| = s$, it still allows Type I (ODE) blow-up at first glance. This issue will be treated in the next section. 

\begin{remark}[Lifespan of $v$] \label{rmk-timeexistence}
Due to \eqref{3012} and \eqref{v-reg-ext}--\eqref{v-reg-ext2}, $v$ is locally regular in $[0,(2R)^{-1})_t \times \mathbb{R}^d$. Such a time of existence can be improved slightly as follows. Using the smallness of $\|(v_0, v_1)\|_{\dot{H}^1 \times L^2}$ in the domain $B_{R^{-1}+\delta}$ for small $\delta>0$, and the small-data well-posedness result in Lemma \ref{wp}, we know that $v$ is regular in the cone $|y| + |s| < R^{-1} + \delta$. Combined with \eqref{v-reg-ext}--\eqref{v-reg-ext2}, we see that $v$ is locally regular in $[0,(2R)^{-1}+{\delta}/{2})_t \times \mathbb{R}^d$. We will further improve this time interval in Section \ref{sec4}, assuming decay of energy in the null channels. This remark applies to the next two cases as well.
\end{remark}


\smallskip

{\sc Case II}: $Q_l \neq 0$, $d = 3$. By \eqref{202}, $w = u - Q_l$  satisfies the equation
\begin{equation} \label{00302}
\Box_{t,x} w = 5 Q_l^4 w + 10 Q_l^3 w^2 + 10 Q_l^2 w^3 + 5 Q_l w^4 + w^5.
\end{equation}
Let
\begin{equation}
\overline{Q_l}(s,y) = Q_l\left(\frac{s}{|y|^2 - s^2}, \frac{y}{|y|^2 - s^2}\right),
\end{equation} 
and, for simplicity, write $I = |y|^2 - s^2$. By \eqref{3005}, the equation
\begin{align} \label{315}
\Box_{s,y} v  &=  5 I^{-2} \overline{Q_l}^4 v + 10 I^{-1} \overline{Q_l}^3 v^2 + 10 \overline{Q_l}^2 v^3 + 5 I \overline{Q_l} v^4 + I^2 v^5 \nonumber  \\
&=:\sum_{i=1}^5 G_j^{(3)} v^j.
\end{align}
holds in $|y| > |s|$. Extend $G_j^{(3)}$ by zero into $|y|<s$. Similar to Case I, we consider the solution $\tilde{v}$ to the Cauchy problem
\begin{equation} \label{transeqd3}
\begin{cases}
\square_{s,y} \tilde{v} = \sum_{j=1}^5 G_j^{(3)} \tilde{v}^j, \\
\tilde{v}|_{s = 0} = v_0, \quad \partial_s \tilde{v}|_{s=0} = v_1,
\end{cases}
\end{equation}
with initial data $(v_0, v_1)$ defined as in \eqref{3008} with $u$ replaced by $w$. \eqref{3009}--\eqref{v-reg-ext2} with $u$ replaced by $w$ are still valid for similar reasons. Again, identify $\tilde{v}$ and $v$, so that $v$ may be extended into the region $|y| \le s$. 

Next, we prove the regularity of $G_j^{(3)}$ at the light cone $|y| = s$, so that local well-posedness theory can be applied to \eqref{transeqd3}. Let us recall the smoothness and decay properties of the steady-states
$Q$ which are summarized in \cite[Section 3]{DKMCompact}. For both $d = 3$ and $d = 4$, $Q \in C^\infty(\mathbb{R}^d)$, and for any $\alpha \in \mathbb{N}^d$,
\begin{equation} \label{Qdecay}
|\partial_x^\alpha Q(x)| \lesssim_{\alpha, Q} |x|^{-d+2-|\alpha|}, \ |x| \ge 1.
\end{equation}
Denote
\begin{equation} \label{KQdef}
K(Q) = \sup_{x \in \mathbb{R}^d} \left\{ |x|^{d-2} |Q(x)| +  |x|^{d-1}|\nabla Q(x)| \right\} < + \infty.
\end{equation}
Without loss of generality, consider $l = (l_1, 0, 0, 0)$ so that
\begin{equation}
Q_l(t,x) = Q\left(\frac{x_1 - l_1 t}{\sqrt{1-l_1^2}}, x_2, x_3\right).
\end{equation}
Then, in $|y| > s \ge 0$,
\begin{align} \label{320}
\left| \overline{Q_l}(s,y) \right| &=  \left| Q\left(I^{-1} \frac{y_1 - l_1 s}{\sqrt{1-l_1^2}}, I^{-1} y_2, I^{-1} y_3\right) \right| \nonumber \\
&\le K(Q) I \left( \left( \frac{y_1 - l_1 s}{\sqrt{1-l_1^2}}  \right)^2 + y_2^2 + y_3^2 \right)^{-\frac12} \nonumber \\
&\lesssim K(Q) I |y|^{-1}.
\end{align}
For the last line, we used the inequality
\begin{equation}
\left( \frac{y_1 - l_1 s}{\sqrt{1-l_1^2}}  \right)^2 + y_2^2 + y_3^2 \gtrsim |y|^2, \quad \mathrm{for} \ |y| > s \ge 0,
\end{equation}
with constant depending only on $l_1$, which can be proved by considering two separate cases $|y_1| > \frac{1+l_1}{2} |y|$ and $|y_1| \le \frac{1+l_1}{2} |y|$. Similarly, we have, in $|y| > s \ge 0$,
\begin{align} \label{321}
\left| \nabla_y \overline{Q_l}(s,y) \right| &\lesssim  \left(I^{-1} + I^{-2} |y|^2\right)\left| (\nabla_x Q)\left(I^{-1} \frac{y_1 - l_1 s}{\sqrt{1-l_1^2}}, I^{-1} y_2, I^{-1} y_3\right) \right| \nonumber \\
&\lesssim K(Q) \left(I^{-1}+I^{-2}|y|^{2}\right) I^2 |y|^{-2} \nonumber \\
&\lesssim K(Q).
\end{align}
From \eqref{315}, \eqref{320} and \eqref{321}, we deduce, in $|y| > s \ge 0$,
\begin{align} \label{G3bound}
\left|G_j^{(3)}(s,y)\right| &\lesssim K(Q)^{5-j} I^2 |y|^{j-5} \le K(Q)^{5-j} |y|^{j-1}, \quad 1 \le j \le 5,
\end{align}
and
\begin{align} \label{dG3bound}
\left|\nabla_y G_j^{(3)}(s,y)\right| &\lesssim K(Q)^{5-j} |y|^{j-2}, \quad 1 \le j \le 5.
\end{align}
In particular, \eqref{G3bound} shows that $G_j^{(3)}$ is continuous at $|y| = s > 0$ after extending by $0$ into $|y| < s$. Note that the singularity for $\nabla_y G_j^{(3)}$ is irrelavant since $v$ is supported away from the spacetime origin. The method here will also be useful in the proof of Lemma \ref{lem12}.

\smallskip

{\sc Case III}: $Q_l \neq 0$, $d = 4$.  By \eqref{202}, $w$ satisfies the equation
\begin{equation} \label{00318}
\Box_{t,x} w = 3Q_l^2 w + 3 Q_l w^2 + w^3.
\end{equation}
Define $v$ as above, then
\begin{align} \label{transeqd4}
\Box_{s,y} v &= 3 I^{-2} \overline{Q_l}^2  v + 3 I^{-\frac12} \overline{Q_l}  v^2 + I v^3  \nonumber \\
&=: \sum_{j = 1}^3 G_j^{(4)} v^j.
\end{align}
As before, we extend $G_j^{(4)}$ by zero into $|y|<s$, and view $v$ as a solution to the corresponding Cauchy problem, and \eqref{3009}--\eqref{v-reg-ext2} with $u$ replaced by $w$ are still valid. Similar to the derivation of \eqref{G3bound} and \eqref{dG3bound}, here we obtain, in $|y| > s \ge 0$,
\begin{align} \label{G4bound}
\left|G_j^{(4)}(s,y)\right| &\lesssim K(Q)^{3-j} I^{-\frac12 j + \frac52} |y|^{2j - 6} \le K(Q)^{3-j} |y|^{j-1}, \quad 1 \le j \le 3,
\end{align}
and
\begin{align} \label{dG4bound}
\left|\nabla_y G_j^{(4)}(s,y)\right| &\lesssim K(Q)^{3-j} |y|^{j-2}, \quad 1 \le j \le 3.
\end{align}
Similarly, \eqref{G4bound} shows that $G_j^{(4)}$ is continuous at $|y| = s > 0$ after extending by $0$ into $|y| < s$.

\section{Regularity and vanishing of $v$ on the light cone} \label{sec4}

 A crucial preparation for the application of unique continuation later is the nontrivial regularity of $v$ on the forward light cone. According to Section \ref{sec2}, at least we will need $\Box v \in L^\frac{2(d+1)}{d+3}(\Omega)$, $v \in L^{\frac{2(d+1)}{d-1}}(\Omega)$, and  $\partial_{s,y} v \in L^\frac{2(d+1)}{d+3}(\Omega)$ where $\Omega$ is the domain (containing certain parts of the forward light cone) to carry out unique continuation. The last condition on $\partial_{s,y} v$ is equivalent to certain weighted estimates on $\partial_{t,x} u$ at null infinity which do not follow from standard Strichartz estimates. In this section, the maximal regularity of $v$ will be obtained using the decay of $\dot{H}^1 \times L^2$ energy for $u$ in the (future) null channel and the transformed equations \eqref{transeq}, \eqref{transeqd3} and \eqref{transeqd4}.  Then, we prove a key vanishing result for $v$ in certain spacetime domains.
 
\begin{lemma} \label{lem12}
Under either of the assumptions from Theorem \ref{thm2} on the initial data $(u_0, u_1)$, suppose for some $A > 0$,
\begin{equation} \label{401}
\lim_{t \to + \infty} \int_{|x|> A+ t} \left( |\partial_t u|^2 + |\nabla u|^2 \right)  \mathrm{d}x = 0,
\end{equation}
then the transformed solution $v$ (defined in section \ref{sec3}) satisfies 
\begin{equation}
\|v\|_{L_{s,y}^{\frac{2(d+1)}{d-2}}(\mathcal{B})} < \infty,
\end{equation}
where $\mathcal{B} = \{ |y| > s > 0 , \ 0 < |y| + s < A^{-1}\}$.
\end{lemma}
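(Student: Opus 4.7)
The plan is to combine the local Strichartz regularity of $v$ away from the cone $\{|y|=s\}$ (furnished by Section \ref{sec3}) with a small-data argument near $|y|=s$, where the required smallness is supplied by the decay assumption \eqref{401} through the conformal inversion. For $\epsilon>0$, decompose $\mathcal{B}=\mathcal{B}_\epsilon\cup\mathcal{B}^\epsilon$ with $\mathcal{B}_\epsilon:=\mathcal{B}\cap\{|y|-s\ge\epsilon\}$. By covering $\mathcal{B}_\epsilon$ with finitely many truncated backward light cones and invoking Lemma \ref{wp} together with \eqref{v-reg-ext}--\eqref{v-reg-ext2}, one obtains $\|v\|_{L^{2(d+1)/(d-2)}_{s,y}(\mathcal{B}_\epsilon)}<\infty$, so the real task is to bound the norm on $\mathcal{B}^\epsilon$ for a suitable $\epsilon>0$.

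Introduce the family of hypersurfaces in $(s,y)$-space obtained as conformal images of the Euclidean slices $\{t=T\}$:
\[
\Sigma_T:=\{(s,y):Ts^2+s-T|y|^2=0,\ |y|>s\}.
\]
A direct computation with $\Phi:=Ts^2+s-T|y|^2$, using the relation $T(s^2-|y|^2)=-s$ on $\Sigma_T$, gives $(\partial_s\Phi)^2-|\nabla_y\Phi|^2=1$; hence $\Sigma_T$ is spacelike, and the family accumulates on $\{|y|=s\}$ as $T\to\infty$. The portion $\Sigma_T\cap\mathcal{B}$ is precisely the conformal image of $\{t=T,\ |x|>A+T\}$. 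Because the conformal weight $(d-1)/2$ in $v=I^{-(d-1)/2}w$ is the critical one rendering the linear wave operator conformally invariant (reflected in \eqref{3005}), the $(s,y)$-Minkowski energy flux of $v$ across $\Sigma_T\cap\mathcal{B}$ equals, up to a bounded conformal factor plus lower-order nonlinear corrections, the $(t,x)$-Minkowski energy flux of $w$ across $\{t=T,\ |x|>A+T\}$. The conformal factor stays uniformly bounded thanks to the lower bound $|y|\ge R^{-1}/2$ on the support of $v$, which follows from finite speed of propagation and \eqref{3012}.

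Under assumption (i), $w=u$ and the latter flux is $\int_{|x|>A+T}(|\partial_t u|^2+|\nabla u|^2)\,dx\to 0$ by \eqref{401}. Under assumption (ii), on $|x|>A+T$ one has $|x-lT|\gtrsim(1-|l|)T$, so \eqref{Qdecay} yields $\int_{|x|>A+T}(|\partial_t Q_l|^2+|\nabla Q_l|^2)\,dx\to 0$, and the $w$-energy on the same region also vanishes. Given the smallness threshold $\eta$ from Lemma \ref{wp}, pick $T_\eta$ so that the $v$-energy on $\Sigma_{T_\eta}\cap\mathcal{B}$ is below $\eta$. Viewing $\Sigma_{T_\eta}$ as a spacelike Cauchy surface (either by a spacelike diffeomorphism flattening it, or by covering $\Sigma_{T_\eta}\cap\mathcal{B}$ with finitely many spacelike patches and using finite propagation), the small-data part of Lemma \ref{wp} applied to \eqref{transeq}, \eqref{transeqd3}, or \eqref{transeqd4} --- whose coefficients are continuous and uniformly bounded on $\mathcal{B}$ by \eqref{G3bound}--\eqref{G4bound} and $I\le A^{-2}$ --- yields $\|v\|_{L^{2(d+1)/(d-2)}_{s,y}}\lesssim\eta$ on the $(s,y)$-future of $\Sigma_{T_\eta}\cap\mathcal{B}$ inside $\mathcal{B}$. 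The remaining part, $\mathcal{B}\cap\{t\le T_\eta\}$, is contained in $\mathcal{B}_{\epsilon'}\cup\{|y|+s\le R^{-1}\}$ for some $\epsilon'>0$; on the second set $v$ vanishes by \eqref{3012}, and on $\mathcal{B}_{\epsilon'}$ the Strichartz bound is already available, so combining gives the desired finiteness.

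The main obstacle is the nonlinear energy transfer: since conformal invariance of $\Box$ is exact only for the linear equation, one must verify that the ``lower-order corrections'' appearing in the flux identity are uniformly dominated by the linear energy on $\Sigma_T\cap\mathcal{B}$, both in Case I (nonlinear source $I^{2/(d-2)}|v|^{(d+2)/(d-2)}$) and in Cases II, III (source $G_j^{(d)}v^j$ controlled by \eqref{G3bound}--\eqref{G4bound}). This reduces to a stress-energy computation using Sobolev embedding together with the boundedness of the coefficients on $\mathcal{B}$. A secondary, routine issue is adapting Lemma \ref{wp} from flat Cauchy slabs to the spacelike slices $\Sigma_T$, handled via the flattening or patching indicated above.
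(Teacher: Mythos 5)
Your overall strategy --- use the decay \eqref{401} to produce small data on a slice close to the forward light cone $\{|y|=s\}$ and then run small-data well-posedness up to the cone, combining with the interior Strichartz bounds \eqref{v-reg-ext}--\eqref{v-reg-ext2} away from the cone --- is the same as the paper's. But the step you yourself flag as ``the main obstacle'' is a genuine gap, and it is precisely the step the whole lemma turns on. You claim that the $(s,y)$-energy data of $v$ on $\Sigma_T\cap\mathcal{B}$ is comparable, ``up to a bounded conformal factor plus lower-order nonlinear corrections,'' to $\int_{|x|>A+T}(|\partial_t w|^2+|\nabla w|^2)\,dx$. This is not a routine stress-energy computation: the pushforward of $\partial_t$ under the inversion is $\bar\rho^2\partial_{\bar\rho}-\bar\sigma^2\partial_{\bar\sigma}$, which degenerates to a \emph{null} vector field on $\{|y|=s\}$, and the support of $v$ on $\Sigma_T$ sits in the strip $(R+2T)^{-1}\ge |y|-s\ge (A+2T)^{-1}$, so the slices become tangent to the light cone exactly where you need them. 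Consequently the decaying exterior energy of $w$ controls, a priori, only a degenerate (null-adapted) combination of derivatives of $v$ there, whereas the small-data input to Lemma \ref{wp} requires the \emph{full} $H^1\times L^2$ data on $\Sigma_T$ in coordinates adapted to that surface, plus an $L^2$ (Hardy-type) bound on $v$ itself. Whether the constants in this comparison are uniform in $T$ is exactly the content that must be computed, and nothing in your write-up does it. A second, related omission: in Cases II and III you invoke only the \emph{boundedness} of $G_j^{(d)}$; the paper needs the shifted constants $\tilde K(Q^{(M_0)})\to 0$ so that the soliton-induced terms (including the genuinely linear one $G_1^{(d)}v$) are perturbative in the small-data iteration near the cone.

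The paper circumvents all of this with one device you are missing: it first translates and rescales in $(t,x)$ so that the slice $\{t=M_0\}$ becomes $\{t'=0\}$ for a new solution $w^{(M_0)}$, and \emph{then} applies the conformal inversion. The image of the data surface is then the flat slice $\{s'=0\}$, the smallness of the transformed data is an elementary Kelvin-transform computation (see \eqref{vesmall}, where the exterior $|x|>A+M_0$ maps to the ball $|y|<M_0/(A+M_0)$ with a harmless Jacobian), and Lemma \ref{wp} applies verbatim on the backward cone $\mathcal{C}$, which reaches all the way to the forward light cone. The original $v$ is then recovered from $v^{(M_0)}$ by the explicit map \eqref{vvM0}, which is non-singular at $\{|y|=s>0\}$. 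In effect, the paper's time-shift performs, exactly and canonically, the ``flattening of $\Sigma_T$'' that you defer to a secondary remark, and in doing so eliminates both the degenerate flux comparison and the need for a well-posedness theory on curved, nearly-null initial surfaces. To make your argument complete you would either have to carry out the degenerate data comparison on $\Sigma_T$ with quantifiers that close, or adopt the paper's re-centering trick --- at which point the two proofs coincide.
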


\begin{proof}
By assumption, for a sufficiently large time $t = M_0 > 0$,
\begin{equation} \label{40100}
 \int_{|x|> A + t, t=M_0} \left( |\partial_t u|^2 + |\nabla u|^2 \right) \mathrm{d}x \le \delta_0,
\end{equation}
where $\delta_0$ is a small number to be chosen later. Consider the time-shifted and rescaled solutions
\begin{align*}
u^{(M_0)}(t,x) &= M_0^{\frac{d}{2}-1} u(M_0t+M_0, M_0 x), \\
Q_l^{(M_0)}(t,x) &= M_0^{\frac{d}{2}-1} Q_l(M_0t+M_0, M_0 x).
\end{align*}
Here, $Q_l$ is set as $0$ if $u$ satisfies the assumption (i) from Theorem \ref{thm2}. Then $w^{(M_0)} = u^{(M_0)} - Q_l^{(M_0)}$ has initial data compactly supported in $\overline{B_{M_0^{-1}(R+M_0)}}$. By the decay of $Q$, and choosing $M_0$ large, we can ensure that
\begin{equation} \label{wedecay}
 \int_{|x|> A + t, t=M_0} \left( |\partial_t w|^2 + |\nabla w|^2 \right) \mathrm{d}x \le 2\delta_0.
\end{equation}
Applying the transform in Section \ref{sec3}  to $w^{(M_0)}(t,x)$, we construct a solution $v^{(M_0)}$ to \eqref{transeq}, \eqref{transeqd3} or \eqref{transeqd4} with initial data given by 
\begin{equation}
v^{(M_0)}_0(y) = |y|^{-d+1} {w^{(M_0)}}(0,\frac{y}{|y|^2}), \quad v^{(M_0)}_1 = |y|^{-d-1} \partial_t {w^{(M_0)}}(0,\frac{y}{|y|^2}).
\end{equation}
Note that $v^{(M_0)}_0 = v^{(M_0)}_1 = 0$ for $|y| < M_0 (R+M_0)^{-1}$. Direct computation shows that
\begin{align} \label{vesmall}
&\quad \ \int_{|y| < \frac{M_0}{A+M_0}} \left( \left|v^{(M_0)}_1\right|^2 + \left|\nabla_y v^{(M_0)}_0\right|^2 \right) \mathrm{d}y \nonumber \\ 
&\lesssim \left(\frac{M_0}{R+M_0}\right)^{-2} \int_{|x|>\frac{A+M_0}{M_0}} \left( \left|\partial_t w^{(M_0)}(0,x)\right|^2 + \left|\nabla_x w^{(M_0)}(0, x)\right|^2 \right) \mathrm{d}x  \nonumber \\
&= \left(\frac{M_0}{R+M_0}\right)^{-2} \int_{|x|>A+M_0} \left( \left|\partial_t w(M_0,x)\right|^2 + \left|\nabla_x w(M_0, x)\right|^2 \right) \mathrm{d}x  \nonumber \\
&\le 2 \left(\frac{M_0}{R+M_0}\right)^{-2} \delta_0.
\end{align}
By choosing $\delta_0$ and $\frac{1}{M_0}$ sufficiently small, the last line can be arbitrarily small,

\smallskip
{\sc Case I}: $Q_l = 0$. Recall that $v^{(M_0)}$ solves the equation \eqref{transeq}. Applying \eqref{vesmall} and the small-data well-posedness result in Lemma \ref{wp} to \eqref{transeq}, we get $v^{(M_0)}$ is regular in the backward cone $\mathcal{C} = \{ |y| + |s| < \frac{M_0}{A+M_0}, s \ge 0 \}$. In particular, we have the energy and Strichartz estimates,
\begin{equation} \label{vM0reg}
\partial_{s,y} v^{(M_0)}  \in {L^\infty_s L^2_y(\mathcal{C})}, \quad v^{(M_0)} \in L_{s,y}^{2(d+1)/(d-2)}(\mathcal{C}).
\end{equation}

\smallskip

{\sc Case II}: $Q_l \neq 0$, $d=3$. $v^{(M_0)}$ solves the equation \eqref{transeqd3} with $Q_l$ replaced by $Q_l^{(M_0)}$. Without loss of generality, consider $l = (l_1, 0, 0, 0)$. Note that
\begin{equation}
Q_l^{(M_0)}(t,x) = Q^{(M_0)}\left(\frac{x_1 - l_1 t}{\sqrt{1-l_1^2}}, x_2, x_3\right),
\end{equation}
where $Q^{(M_0)}$ is a new steady-state,
\begin{equation}
Q^{(M_0)}(x_1, x_2, x_3) = M_0^{\frac{d}{2}-1} Q\left( M_0 \left(x_1 - \frac{l_1}{\sqrt{1-l_1^2}}\right), M_0 x_2, M_0 x_3 \right).
\end{equation}
We need an estimate similar to \eqref{G3bound} but with $K(Q)$ modified. Let 
$$\tilde{l} = \left(\frac{l_1}{\sqrt{1-l_1^2}}, 0, 0 \right),$$ 
then we have, for $|x|>t+1$,
\begin{equation}
\left|\left(\frac{x_1 - l_1 t}{\sqrt{1-l_1^2}}, x_2, x_3\right) - \tilde{l}\right| \ge c_0 > 0,
\end{equation}
with $c_0$ depending only on $l_1$. Define
\begin{equation}
\tilde{K}(Q) = \sup_{x \in \mathbb{R}^d, |x-\tilde{l}| \ge c_0} \left\{ |x|^{d-2}|Q(x)| + |x|^{d-1}|\nabla Q(x)| \right\} < + \infty.
\end{equation}
For $(s, y) \in \mathcal{C}$, the corresponding $(t,x)$ satisfies $|x|>t+1$. Repeating the proof of \eqref{G3bound} and \eqref{dG3bound}, we see that they still hold with $(s,y) \in \mathcal{C}$ and with $K(Q)$ replaced by $\tilde{K}(Q)$. Using the decay of $Q$, one can check that,
\begin{equation}
\tilde{K}(Q^{(M_0)}) \to 0, \quad \mathrm{as} \ M_0 \to \infty.
\end{equation}
Hence, by choosing $M_0$ large, we can apply the small-data well-posedness result to $v^{(M_0)}$ localized in $\mathcal{C}$ and deduce the same bounds in \eqref{vM0reg}.

\smallskip

{\sc Case III}: $Q_l \neq 0$, $d=4$. This case is similar to Case II above: by choosing $M_0$ large, using the small-data well-posedness result for $v^{(M_0)}$ localized in $\mathcal{C}$, the regularity estimates \eqref{vM0reg} are again valid.

\smallskip

Next, we use that, in $\{|y| > s \ge 0\} \cap \{t = I^{-1}s \ge M_0\}$, $v^{(M_0)}$ and $v$ are related by
\begin{equation} \label{vvM0}
v(s, y) = M_0^{1 - \frac{d}{2}} J^{\frac{1-d}{2}} v^{(M_0)}\left(\frac{s - IM_0}{M_0J}, \frac{y}{M_0J}\right),
\end{equation}
where $J = -|y|^2 + (s+M_0^{-1})^2$. For $|y| = s > 0$, $J > 0$. The mapping
\begin{equation}
(s, y) \mapsto (s', y') = \left(\frac{s - IM_0}{M_0J}, \frac{y}{M_0J}\right)
\end{equation}
is non-singular at the forward light cone $|y| = s >0$ and sends $\{ (2R)^{-1} < |y| = s <(2A)^{-1}\}$ to $\{ \frac{M_0}{2(R+M_0)} < |y'| = s' < \frac{M_0}{2(A+M_0)}\}$. Combining \eqref{vM0reg}, \eqref{vvM0} and \eqref{3011}, we deduce 
\begin{equation}
v \in L_{s,y}^{\frac{2(d+1)}{d-2}}(\mathcal{B}).
\end{equation}
\end{proof}

\begin{lemma} \label{lem-trace}
Under the assumptions of Lemma \ref{lem12}, we have, as $M \to +\infty$,
$$\int_{\Sigma_M} \left|v\right|^2 \mathrm{d}s\mathrm{d}S_y \lesssim_{A,R} \int_{\Sigma_M} \left|(\partial_s + \partial_{|y|})v\right|^2 \mathrm{d}s\mathrm{d}S_y \to 0.$$
where $ \Sigma_M =  \{ |y| -s = M^{-1}, \, R^{-1} < |y| + s < A^{-1}, \, s > 0\} $. 
\end{lemma}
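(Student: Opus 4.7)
First I establish the Poincar\'e-type inequality via the fundamental theorem of calculus along the null direction tangent to $\Sigma_M$. Introduce the null coordinates $\bar\rho = |y|+s$, $\bar\sigma = |y|-s$, so that $\Sigma_M = \{\bar\sigma = M^{-1},\ \bar\rho \in (R^{-1}, A^{-1})\}$ and $\partial_s + \partial_{|y|} = 2\partial_{\bar\rho}$ is tangent to $\Sigma_M$ along $\bar\rho$. By the vanishing \eqref{3012} of $v$ on $\{|y|+s \le R^{-1}\}$, we have $v|_{\bar\rho = R^{-1}} = 0$ on $\Sigma_M$; the fundamental theorem of calculus and Cauchy--Schwarz then yield
\[
|v(\bar\rho, \omega)|^2 \le (A^{-1}-R^{-1}) \int_{R^{-1}}^{A^{-1}} |\partial_{\bar\rho'} v|^2\, d\bar\rho'.
\]
Integrating against $ds\,dS_y$, whose weight in $(\bar\rho,\omega)$ coordinates is uniformly bounded above and below (for $M$ large) by constants depending only on $A,R$, gives the first claim.

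For the decay as $M \to \infty$, I translate the right-hand side back to $(t,x)$. Using $v = M^{(d-1)/2}\bar\rho^{-(d-1)/2}\,w$ on $\Sigma_M$ together with the chain-rule identity $\partial_{\bar\rho} w = -\bar\rho^{-2}\partial_\sigma w$ (where $\sigma = r-t = \bar\rho^{-1}$), a direct computation and the substitution $\bar\rho = 1/\sigma$ give, for large $M$,
\[
\int_{\Sigma_M} |(\partial_s + \partial_{|y|})v|^2\, ds\,dS_y\ \sim\ \int_A^R\!\int_{S^{d-1}} r^{d-1}\Big(\tfrac{d-1}{2}\,w + \sigma\,\partial_\sigma w\Big)^2 d\omega\,d\sigma,
\]
with $w$, $\partial_\sigma w$ evaluated on the ingoing null cone $\{r+t = M\}$ at $r-t = \sigma$. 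Expanding the square, integrating the cross term $\sigma w\,\partial_\sigma w$ by parts in $\sigma$, and using a one-dimensional Poincar\'e estimate together with a trace bound at $\sigma = A$ --- all exploiting $w|_{\sigma = R} = 0$ (which follows from the compact support of $w$ and finite propagation speed) --- the above is dominated by
\[
C_{A,R}\int_A^R\!\int_{S^{d-1}} r^{d-1}\,|\partial_\sigma w|^2\, d\omega\,d\sigma,
\]
i.e., the $\partial_t$-flux of $w$ through the portion $\{A \le r-t \le R\}$ of the ingoing null cone $\{r+t=M\}$.

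To see that this flux tends to $0$, I apply the $\partial_t$-energy identity for the equation satisfied by $w$ (namely \eqref{202} in Case I, or \eqref{00302}/\eqref{00318} in Cases II and III) on the region $\Omega_{M,t_0} = \{\,t \ge t_0,\ A \le r-t,\ r+t \le M\,\}$, with $t_0$ large. Its boundary consists of the slice $\{t = t_0\}$, the outgoing cone $\{r-t = A\}$ (whose $\partial_t$-flux is nonnegative up to controllable nonlinear contributions), the ingoing cone $\{r+t = M\}$, and $\{r-t=R\}$ (on which $w$ vanishes). By hypothesis \eqref{401}, together with the pointwise decay of $Q_l$ in Cases II and III, the $w$-energy on $\{t=t_0,\ r > A+t_0\}$ tends to $0$ as $t_0 \to \infty$. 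The source terms $\partial_t w \cdot F_w$ are decomposed into total time derivatives (producing further boundary contributions that share this decay) plus remainders controlled by Sobolev embedding and the coefficient bounds \eqref{G3bound}--\eqref{dG4bound}. The main obstacle is precisely this absorption of the focusing nonlinearity and the soliton cross terms on $\Omega_{M,t_0}$; it becomes tractable because the remainders are effectively subcritical once the exterior $\dot H^1$-energy is small. Taking $t_0 \to \infty$ and then $M \to \infty$ concludes the argument.
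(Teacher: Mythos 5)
Your overall architecture (a $\partial_t$-energy flux identity on a region bounded by null cones, followed by a one-dimensional Poincar\'e inequality along $\bar\rho$ using the vanishing of $v$ at $\bar\rho = R^{-1}$ from \eqref{3012}) matches the paper's, and the Poincar\'e step is correct. The gap is in the flux estimate, at exactly the point you flag as "the main obstacle." Both null boundaries of your region $\Omega_{M,t_0}$ carry the focusing density $-\tfrac{d-2}{2d}|u|^{\frac{2d}{d-2}}$ with the wrong sign. The outgoing cone $\{r-t=A\}$ is not a surface where $w$ vanishes, so its flux is \emph{not} "nonnegative up to controllable nonlinear contributions": the negative potential term there is a priori of the same size as the gradient terms you are trying to bound. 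And on the ingoing cone $\{r+t=M\}$ the identity only yields smallness of $\tfrac12|(\partial_t-\partial_r)w|^2+\tfrac12|\partial_\omega w/r|^2-\tfrac{d-2}{2d}|u|^{\frac{2d}{d-2}}$, so you must still show that the nonlinear trace on that cone is small before you can extract $\int|(\partial_t-\partial_r)w|^2\to 0$. Asserting that the remainders are "effectively subcritical once the exterior $\dot H^1$-energy is small" names the difficulty without resolving it: smallness of $\int_{|x|>A+t}|\nabla_{t,x}u|^2$ on a fixed time slice does not by itself control the trace of $|u|^{\frac{2d}{d-2}}$ on a null hypersurface.

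The paper closes this with two devices absent from your write-up. First, the integration domain is chosen so that the \emph{only} outgoing null boundary is $\{r-t=R\}$, on which $w\equiv 0$ and hence only the decaying soliton $Q_l$ contributes; the remaining boundaries are the ingoing cone and a time slice $\{t=M\}$, whose contribution vanishes by \eqref{401}. Second, the sign-indefinite term on the ingoing cone is absorbed by a continuous induction in the parameter $B\in[A,R]$ labelling the family of ingoing cones $\{r+t=2M+B\}$: writing $T_{M,B}$ for the positive (gradient) part of the flux of $w$, one has $T_{M,R}=0$, continuity in $B$, and --- by a Sobolev/trace inequality on the null cone together with $w|_{r-t=R}=0$ --- the nonlinear contribution is $\lesssim T_{M,B}^{d/(d-2)}$, i.e.\ superlinear in $T_{M,B}$, so the bootstrap $T_{M,B}\le\varepsilon(M)$ closes. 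Without this mechanism (or a fully executed small-data exterior argument replacing it), your proof does not go through.
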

\begin{proof}
Recall that $w = u - Q_l$ and the case $Q_l = 0$ is included. Multiplying \eqref{202} by $\partial_t u$ we have
\begin{align} \label{0402}
\partial_t \left( \frac12 |\partial_t u|^2 + \frac12 |\nabla u|^2 \right) - \div (\partial_t u \nabla u) = \partial_t \left( \frac{d-2}{2d} |u|^{\frac{2d}{d-2}}\right).
\end{align}
For any $M > R , A \le B < R$, integrate \eqref{0402} in the spacetime domain 
$$U_{M,B} = \{|x|+t \ge 2M + B, B \le |x|-t \le R, t \le M\}.$$
Since $\partial U_{M,B}$ consists of three parts, 
\begin{align*}
\Sigma_{1,M,B} &= \{ M + \frac{B-R}{2} \le t \le M, \, |x| + t = 2 M + B\}, \\
\Sigma_{2,M,B} &= \{t=M, \, M + B <|x| < R + M\}, \\
\Sigma_{3,M,B} &= \{ M + \frac{B-R}{2} \le t \le M, \, |x| - t = R\}.
\end{align*}
we deduce the energy estimate,
\begin{align} \label{ee}
&\quad \int_{\Sigma_{1,M,B}} \left(\frac12 \left|(\partial_t - \partial_r) u\right|^2 + \frac12 \left|\frac{\partial_\omega u}{r}\right|^2 - \frac{d-2}{2d} |u|^{\frac{2d}{d-2}} \right) \mathrm{d}t\mathrm{d}S  \nonumber \\
&=  \int_{\Sigma_{2,M,B}} \left(\frac12 |\partial_t u|^2 + \frac12 |\nabla u|^2 - \frac{d-2}{2d} |u|^{\frac{2d}{d-2}} \right) \mathrm{d}x \nonumber \\
&\quad - \int_{\Sigma_{3,M,B}} \left(\frac12 \left|(\partial_t + \partial_r) u\right|^2 + \frac12 \left|\frac{\partial_\omega u}{r}\right|^2 - \frac{d-2}{2d} |u|^{\frac{2d}{d-2}} \right) \mathrm{d}t\mathrm{d}S \nonumber \\
&=: I_2 + I_3.
\end{align}
Here, $dS$ is the volume element on spheres. By the compact support assumptions, on $\Sigma_3$, $u = Q_l$ and $w = 0$. By \eqref{Qdecay} and \eqref{Qldefinition}, in $d = 3, 4$ all three integrals in \eqref{ee} with $u$ replaced by $Q_l$ decay to zero as $M \to +\infty$. In $d = 5$, we only consider $Q_l = 0$. Hence, with $\varepsilon(M)$ representing a sequence of numbers converging to $0$ (uniformly in B) as $M \to \infty$, we have
$I_3 = \varepsilon(M)$. By \eqref{401} and Sobolev embedding, we also have $I_2 = \varepsilon(M)$ as $M \to \infty$. Hence,
\begin{align} \label{wee}
 \int_{\Sigma_{1,M,B}} \left(\frac12 \left|(\partial_t - \partial_r) u\right|^2 + \frac12 \left|\frac{\partial_\omega u}{r}\right|^2 - \frac{d-2}{2d} |u|^{\frac{2d}{d-2}} \right) \mathrm{d}t\mathrm{d}S  = \varepsilon(M).
\end{align}
It is shown in \cite[Lemma 3.2]{DJKM} that the trace of $|u|^{\frac{2d}{d-2}}$ on the light cone $|y| = s$ is well-defined. Hence the integral
\begin{align}
T_{M,B} := \int_{\Sigma_{1,M,B}} \left(\frac12 \left|(\partial_t - \partial_r) w\right|^2 + \frac12 \left|\frac{\partial_\omega w}{r}\right|^2 \right) \mathrm{d}t\mathrm{d}S  
\end{align}
is finite and continuous in $B$. By definition, $T_{M,B} = 0$ for $B = R$. By Sobolev embedding on $\Sigma_{1,M,B}$ and the compact support of $w$, we have
\begin{equation}
\int_{\Sigma_{1,M,B}} \frac{d-2}{2d} |w|^{\frac{2d}{d-2}} \mathrm{d}t\mathrm{d}S \lesssim T_{M,B}^{\frac{d}{d-2}}.
\end{equation}
Hence, using \eqref{wee}, the decay of $Q_l$ and a continuous induction argument in $B$, we get that $T_{M,B} \le \varepsilon(M)$ for all $B \in [A,R]$. In particular, we have
\begin{equation} \label{4018}
\int_{\Sigma_{1,M,A}} |(\partial_t - \partial_r)w|^2 \mathrm{d}t\mathrm{d}S   = \varepsilon(M).
\end{equation}
By the definition \eqref{3002} of $v$ and \eqref{4018},
$$\int_{\Sigma'_{1,M,A}} |(\partial_s + \partial_{|y|})v|^2 \mathrm{d}s\mathrm{d}S_y = \varepsilon(M).$$
where $ \Sigma'_{1,M,A} =  \{ |y| -s = (2M + A)^{-1}, \, R^{-1} < |y| + s < A^{-1}, \, s > 0\} $. Redefining $M$, and using Poincar\'e's inequality we get the conclusion.

\end{proof}

\begin{corollary} \label{cor12}
Under the assumptions of Lemma \ref{lem12}, $v$ is locally regular up to $s = (2A)^{-1}$, and vanishes in $\{(s,y): |y| \le s < (2A)^{-1}\}$.
\end{corollary}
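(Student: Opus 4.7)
The plan is to define an extension $V$ of $v$ by zero into the interior region $\{|y|\le s\}$, verify that $V$ is a distributional solution of the same transformed equation as $\tilde v$ on $\{0 \le s < (2A)^{-1}\}$, and then use the uniqueness part of Lemma \ref{wp} to identify $V$ with $\tilde v$. The vanishing and regularity claims are then both read off from this identification.

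First, I would combine Lemma \ref{lem12} with the blow-up criterion from Lemma \ref{wp} to upgrade the Strichartz bound $\|v\|_{L^{2(d+1)/(d-2)}(\mathcal B)} < \infty$ into genuine regularity of $v$ throughout the exterior $\{|y|>s,\, |y|+s < A^{-1}\}$, applied in backward light cones with apex in this exterior. Lemma \ref{lem-trace} then gives the $L^2$-vanishing of the traces of $v$ and of the tangential null derivative $(\partial_s+\partial_{|y|})v$ on the cone $\{|y|=s\}$ over the range $(2R)^{-1}<s<(2A)^{-1}$; on the remaining piece $0\le s \le (2R)^{-1}$, both traces already vanish by finite propagation from the support of the initial data of $\tilde v$.

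Next, set
\[
V(s,y) = (1-H(s-|y|))\,v(s,y),
\]
where $H$ is the Heaviside function, so that $V=v$ in $\{|y|>s\}$ and $V=0$ in $\{|y|<s\}$. Using that $v$ has vanishing trace on the cone (so $\delta(s-|y|)\,v|_{|y|=s}=0$), a direct distributional computation gives
\[
\Box V = F(V)-\delta(s-|y|)\,(\partial_s+\partial_{|y|})v\big|_{|y|=s},
\]
where $F$ denotes the nonlinear right-hand side of \eqref{transeq}, \eqref{transeqd3} or \eqref{transeqd4}. By Lemma \ref{lem-trace}, the coefficient of $\delta(s-|y|)$ vanishes, so $V$ is a distributional solution to the same equation as $\tilde v$, and has matching Cauchy data $(v_0,v_1)$ at $s=0$. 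The Strichartz bound for $v$ on $\mathcal{B}$ extends trivially to $V$ on any compact subset of $\{0<|y|+s<A^{-1}\}$ (it is unchanged in the exterior and zero in the interior). By the uniqueness part of Lemma \ref{wp}, applied in small backward cones tiling $\{0\le s<(2A)^{-1}\}$, we conclude $\tilde v \equiv V$ on this slab, so $\tilde v=0$ in $\{|y|\le s<(2A)^{-1}\}$ and $\tilde v$ is regular up to $s=(2A)^{-1}$.

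The main obstacle is making the distributional identity for $V$ rigorous: one must check that the $L^2$-type vanishing provided by Lemma \ref{lem-trace} is strong enough to kill the Dirac contribution along the characteristic cone, and that the glued object $V$ lies in the uniqueness class of Lemma \ref{wp}. I would handle this by approximation, replacing the sharp cutoff $H(s-|y|)$ with a smooth mollification localized near a slightly displaced surface $\Sigma_M=\{|y|-s=M^{-1}\}$ and passing to the limit $M\to\infty$, with the trace bounds of Lemma \ref{lem-trace} providing the uniform control needed at the boundary.
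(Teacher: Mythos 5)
Your route is genuinely different from the paper's: you build the candidate solution $V$ by gluing $v$ to zero across the characteristic cone and then invoke the \emph{uniqueness} part of Lemma \ref{wp}, whereas the paper argues by \emph{contradiction with the blow-up criterion}. Concretely, the paper assumes $\tilde v$ blows up at some $T^+\in((2R)^{-1},(2A)^{-1})$; before $T^+$ the actual solution automatically has the full regularity $C_s(H^1\times L^2)\cap L_s^{\frac{d+2}{d-2}}L_y^{\frac{2(d+2)}{d-2}}$ of Lemma \ref{wp}, so the trace of $|v|^{\frac{2d}{d-2}}$ on the cone and the energy flux identity \eqref{lee} can be justified for it. Combining the zero flux (Lemma \ref{lem-trace}) with the vanishing on the bottom cap \eqref{3012}, the paper deduces $v\equiv 0$ inside the cone for $s<T^+$, so the diagonal Strichartz norm $\|v\|_{L^{2(d+1)/(d-2)}_{s,y}}$ up to $T^+$ reduces to the exterior contribution, which is finite by Lemma \ref{lem12}; this contradicts the blow-up criterion.

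The gap in your version is the step ``the glued object $V$ lies in the uniqueness class of Lemma \ref{wp}.'' That class requires $(V,\partial_s V)\in C_s(H^1\times L^2(B_{T-s}))$ together with the $L_s^{\frac{d+2}{d-2}}L_y^{\frac{2(d+2)}{d-2}}$ bound, on time slices that \emph{cross} the light cone. Lemma \ref{lem12} only supplies the conformally invariant norm $L^{2(d+1)/(d-2)}_{s,y}(\mathcal B)$ (the energy norm does not transfer under the conformal map \eqref{vvM0}, which is why the paper transfers only that norm), and the exterior local theory in backward cones with apex in the open exterior yields energy bounds only on compact sets bounded away from $\{|y|=s\}$; a priori the energy of $v$ could concentrate at the cone, in which case $V(s,\cdot)$ need not lie in $H^1$ on slices meeting the cone and uniqueness cannot be invoked. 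Establishing this uniform control up to the cone is essentially the content of the regularity statement you are trying to prove, so the argument as written is circular at this point. The blow-up-criterion device in the paper's proof exists precisely to sidestep this: it lets one work with the genuine solution, for which the needed regularity holds by construction on $[0,T')$ for every $T'<T^+$. Your distributional computation of $\Box V$ and the use of Lemma \ref{lem-trace} to kill the Dirac layer are fine in spirit (and parallel the paper's use of the zero flux through $\mathcal L$), but they do not repair this membership issue.
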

\begin{proof}

Take a large truncated backward light cone $\mathcal{K} = \{|y| < T-s, 0 \le s \le T_0\}$ with $T > 4 A^{-1}$,  $T_0 = T/2$ and we apply Lemma \ref{wp} in $\mathcal{K}$ to the equation of $v$, that is, \eqref{transeq}, or \eqref{transeqd3}, or \eqref{transeqd4}.  By Remark \ref{rmk-timeexistence}, $v$ is locally regular up to $s=(2R)^{-1} + \delta$ for some $\delta > 0$.

Suppose that the solution $v$ in $\mathcal{K}$ blows up at time $(2R)^{-1} < T^+ < (2A)^{-1}$, let us derive a contradiction. For any $T' < T^+$, we know that
\begin{equation}
\|(v, \partial_s v)\|_{C_s(H^1\times L^2)(\mathcal{K} \cap \{s \le T'\})} + \|v\|_{L_s^\frac{d+2}{d-2}L_y^{\frac{2(d+2)}{d-2}}(\mathcal{K} \cap \{s \le T'\})} < +\infty.
\end{equation}
Using \cite[Lemma 3.2]{DJKM}, the trace of $|v|^{\frac{2d}{d-2}}$ is well-defined on the part of forward light cone $\mathcal{L} = \{|y| = s\}   \cap \{s < T^+\}$. Lemma \ref{lem-trace} and \eqref{3012} shows that $v = 0$ on $\mathcal{L}$. Now notice that $\Box v = 0$ in the region $\{(s,y): |y| < s < T^+\}$, and the energy flux across $\mathcal{L}$ is zero. Using \eqref{3012} and the energy identity\footnote{To justify this identity for $v$, one can use smooth approximations as in the proof of \cite[Lemma 3.2]{DJKM}. Note that the local solutions in Lemma \ref{wp} depend continuously on the initial data and on the coefficients.} for any $(2R)^{-1} < T' < T^+$,
\begin{align} \label{lee}
& \int_{\Sigma_1} \left(\frac12 \left|\partial_t  v\right|^2 + \frac12 |\nabla v|^2  \right) \mathrm{d}x - \int_{\Sigma_2} \left(\frac12 \left|\partial_t  v\right|^2 + \frac12 |\nabla v|^2  \right) \mathrm{d}x \nonumber \\
&\quad = \int_{\Sigma_{3}} \left(\frac12 \left|(\partial_t + \partial_{|y|}) v\right|^2 + \frac12 \left|\frac{\partial_\omega v}{r}\right|^2 \right) \mathrm{d}t\mathrm{d}S,
\end{align}
where
\begin{align*}
\Sigma_1 &= \{s = T', |y| < T'\},\\
\Sigma_2 &= \{s = (2R)^{-1}, |y| < (2R)^{-1}\}, \\
\Sigma_3 &= \{ (2R)^{-1} < s = |y| < T'\},
\end{align*}
we deduce that $v = 0$ for $|y| < s < T^+$. Hence, using Lemma \ref{lem12} and \eqref{3011} we have
\begin{equation}
v \in L_{s,y}^{\frac{2(d+1)}{d-2}}\left(\mathcal{K} \cap \{s < T^+\}\right). 
\end{equation}
This is a contradiction with the blow-up criterion in Lemma \ref{wp}. The conclusion of the lemma now follows.

\end{proof}

\section{Proof of the main results} \label{sec5}

\begin{proof} [Proof of Theorem \ref{thm2}]
Recall that $w = u - Q_l$ and $v$ is the extended transformed solution introduced in Section \ref{sec3}. By assumption (i) (where $Q_l$ is set as zero) or (ii), $w$ has compact spatial support.

\smallskip

\textbf{Statement (A)}: The proof is divided into three steps. 

\smallskip 

\textbf{Step 1}: we prove the vanishing of $w$ outside a timelike hyperboloid assuming that $\mathrm{supp}\,(w_0, w_1) \subset \overline{B_R}$ and
\begin{align} \label{step1ass}
 \lim_{t \to \pm \infty} \int_{|x| > |t|} \left( |\partial_t u|^2 + |\nabla u|^2 \right) \mathrm{d}x = 0.
\end{align}
Using \eqref{step1ass} and Corollary \ref{cor12} with arbitrary $A>0$, we know that $v$ is global for positive times, and vanishes inside the forward light cone $\{(s,y): s > |y|\}$. On the other hand, the compact support of $w$ implies that $v$ vanishes in the backward cones $\{(s,y): |s| + |y| \le R^{-1}\}$. As a result, we are in a similar geometric setting as those considered in \cite[Theorem 1.1]{IK} and \cite{Ler}. For our purpose, we have to find the maximal range of unique continuation. The regularity for $v$ proved in Corollary \ref{cor12} ensures that, for any compact $\mathcal{K} \subset \{(s,y): s\ge 0\}$,
\begin{equation} \label{v-reg}
|\nabla_{s,y} v | \in {L^\infty_s L^2_y(\mathcal{K})}, \quad v \in L_{s,y}^{\frac{2(d+1)}{d-2}}(\mathcal{K}).
\end{equation}

\smallskip

{\sc Case I}: $Q_l = 0$. The equation \eqref{transeq} for $v$ can be written as
\begin{equation}
(\Box + V_1) v = 0, \quad V_1 = (|y|^2 - s^2)_+^\frac{2}{d-2} |v|^{\frac{4}{d-2}}.
\end{equation}
By \eqref{v-reg}, we have
\begin{equation}
V_1 \in L_{s,y}^{\frac{d+1}{2}}(\mathcal{K}).
\end{equation}

\smallskip

{\sc Case II}: $Q_l \neq 0$, $d = 3$. The equation \eqref{transeqd3} for $v$ can be written as
\begin{equation}
(\Box + V_2) v = 0, \quad V_2 = \sum_{j = 1}^5 G_j^{(3)} v^{j-1}.
\end{equation}
By \eqref{v-reg} and \eqref{G3bound}, we also have
\begin{equation}
V_2 \in L_{s,y}^{\frac{d+1}{2}}(\mathcal{K}).
\end{equation}

\smallskip

{\sc Case III}: $Q_l \neq 0$, $d = 4$. The equation \eqref{transeqd4} for $v$ can be written as
\begin{equation}
(\Box + V_3) v = 0, \quad V_3 = \sum_{j = 1}^3 G_j^{(4)} v^{j-1}.
\end{equation}
By \eqref{v-reg} and \eqref{G4bound}, we also have
\begin{equation}
V_3 \in L_{s,y}^{\frac{d+1}{2}}(\mathcal{K}).
\end{equation}

\smallskip

Hence, in each case, we are ready to apply Lemma \ref{lem8}. To find the optimal unique continuation range, we define the strongly pseudoconvex foliation 
\begin{equation}
\Gamma_{a,\delta} : = \left\{- (|y|-\delta)^2 + (s- 2\delta)^2 + a^2 = 0, |y| > \delta, s > 0\right\},
\end{equation}
with $0 < a^2 < (R^{-1} -\delta)^2 - 4\delta^2$ and  $0 < \delta \ll R^{-1}$. The orientation here is given by 
$$\Gamma_{a,\delta}^+ : = \left\{- (|y|-\delta)^2 + (s- 2\delta)^2 + a^2 > 0, s > 0\right\}.$$
When $a$ is sufficiently small, $v$ vanishes in a neighbourhood of $\Gamma_{a,\delta}$. For each $0 < a^2 < (R^{-1} -\delta)^2 - 4\delta^2$, we know that $\Gamma_{a,\delta} \, \cap \, \mathrm{supp} \, v$ is compact. Lemma \ref{lem8} and a continuous induction argument in $a$ gives $v = 0$ in $\Gamma_{R^{-1} - \delta, \delta}^+$. The arbitrariness of $\delta$ gives that
\begin{equation} \label{vvanish1}
v = 0, \quad \mathrm{for} \ |y|^2 - s^2 < R^{-2}, s \ge 0. 
\end{equation}
By \eqref{vvanish1}, going back to the $(t,x)$ coordinates, we deduce that
\begin{equation} \label{wvanishpara}
w = 0, \quad \mathrm{for} \ |x|^2 - t^2 > R^2, t \ge 0.
\end{equation}
The condition $t \ge 0$ in \eqref{wvanishpara} can be removed by applying the same argument to the time-reversed version of $u$ and using \eqref{step1ass}.

\begin{figure}[!h]
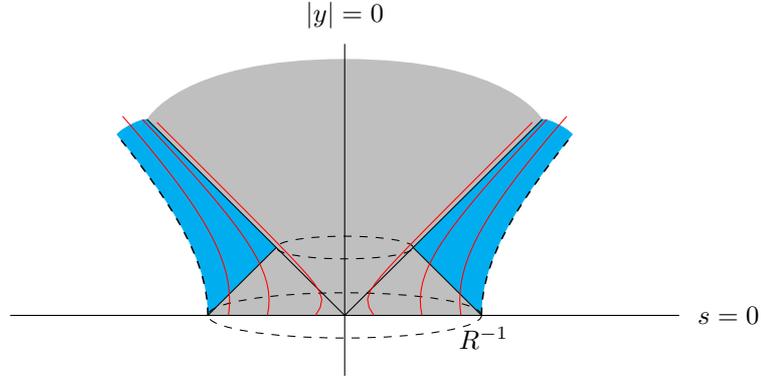

\centering
\tikz[scale = 2] {

\fill [cyan, domain=0:1.2, variable=\x]
  (0.45, 0.45)
  -- plot ( {sqrt(0.81+\x*\x)} , \x)
 -- plot [smooth, tension = 1.5] coordinates {(1.5,1.2) (1.4, 1.27) (1.3, 1.3)}
  -- cycle;

\fill [cyan, domain=0:1.2, variable=\x]
  (-0.45, 0.45)
  -- plot ( {-sqrt(0.81+\x*\x)} , \x)
  -- plot [smooth, tension = 1.5] coordinates {(-1.5,1.2) (-1.4, 1.27) (-1.3, 1.3)}
  -- cycle;

\fill [lightgray]
  (-0.9, 0) -- (0.9, 0) -- (0.45, 0.45) -- (1.3,1.3) -- plot [smooth, tension = 1.5] coordinates {(1.3,1.3) (0,1.7) (-1.3, 1.3)} -- (-1.3 , 1.3) -- (-0.45,0.45) -- cycle;

\draw [dashed] (0,0) ellipse (0.9 and 0.15);
\draw (-2.2,0) -- (2.2,0);
\draw (0, -0.4) -- (0,1.8);
\draw (0, 0) -- (1.3, 1.3);
\draw (0, 0) -- (-1.3, 1.3);

\draw (0.9, 0) -- (0.45 , 0.45);
\draw (-0.9, 0) -- (-0.45, 0.45);
\draw [dashed] (0,0.45) ellipse (0.45 and 0.075);

\draw [line width = 0.2mm, dashed, domain=0:1.2] plot ( {sqrt(0.81+\x*\x)} , \x) ;

\draw [line width = 0.2mm, dashed, domain=0:1.2] plot ( {-sqrt(0.81+\x*\x)} , \x) ;

\draw [color = red, domain=-0.1:1.22] plot ( {sqrt(0.5+\x*\x)+0.05} , \x + 0.1) ;
\draw [color = red, domain=-0.1:1.2] plot ( {sqrt(0.2+\x*\x)+0.05} , \x + 0.1) ;
\draw [color = red, domain=-0.1:1.18] plot ( {sqrt(0.01+\x*\x)+0.05} , \x + 0.1) ;

\draw [color = red, domain=-0.1:1.22] plot ( {-sqrt(0.5+\x*\x)-0.05} , \x + 0.1) ;
\draw [color = red, domain=-0.1:1.2] plot ( {-sqrt(0.2+\x*\x)-0.05} , \x + 0.1) ;
\draw [color = red, domain=-0.1:1.18] plot ( {-sqrt(0.01+\x*\x)-0.05} , \x + 0.1) ;

\node[label=left:{$s = 0$}]  at (2.85, 0) {};
\node[label=right:{}]  at (-2.85, 0) {};
\node[label=below:{$|y| = 0$}]  at (0, 2.2) {};

\node[label=left:{$R^{-1}$}]  at (1.2, -0.15) {};
}

\caption{In $(s, y)$ coordinates. Using the vanishing of $v$ in the gray region, we prove the vanishing of $v$ in the blue region. The red curves represent the strongly pseudoconvex hypersurfaces $\Gamma_{a, \delta}$.}
\end{figure}


\smallskip

\textbf{Step 2}: we prove a weaker energy channel property: it is impossible that 
\begin{align} \label{step2ass}
 \lim_{t \to \pm \infty} \int_{|x|>-\frac{\e}{2} + |t|} \left( |\partial_t u|^2 + |\nabla u|^2 \right) \mathrm{d}x = 0.
\end{align}
Let us argue by contradiction, suppose \eqref{step2ass} holds. Without loss of generality, assume $\mathrm{supp}\,(w_0, w_1) \subset \overline{B_R}$ and there is a point $x_0 \in \mathrm{supp}\,(w_0, w_1)$ with $|x_0| = R$.  By Step 1, for any small $k > 0$, there exists $\nu(k) > 0$ such that
\begin{equation} \label{vanish-line}
w = 0, \quad \mathrm{for} \ |x| > k|t| + R, \  |t| < \nu(k).
\end{equation}
In the above unique continuation argument, we only need \eqref{step2ass} for $\e = 0$. The same method can be applied to the time-shifted solutions $T_\tau w(t, x) := w(t + \tau, x) = u(t + \tau, x) - Q_l(t+\tau,x)$, $|\tau| < \varepsilon$. According to \eqref{vanish-line}, for $\tau < \nu(k)$, the support of $\left(T_\tau w(0, \cdot), \partial_t T_\tau w(0,\cdot) \right)$ is contained in $\overline{B_{k|\tau| + R}}$. The number $\nu(k)$ in \eqref{vanish-line} can be chosen such that \eqref{vanish-line} holds with  $R$ slightly varied. Similar to \eqref{vanish-line} we have
\begin{equation} \label{vanish-line11}
T_\tau w = 0, \quad \mathrm{for} \ |x| > k|t| + R + k |\tau|, \  |t| < \nu(k).
\end{equation}
Induction in $\tau$ gives 
\begin{equation} \label{vanish-line2}
w = 0, \quad \mathrm{for} \ |x| > k|t| + R, \  |t| < \frac{\e}{4}.
\end{equation}
By the arbitrariness of $k$,
\begin{equation} \label{vanish-line3}
w = 0, \quad \mathrm{for} \ |x| > R, \ |t| < \frac{\e}{4}.
\end{equation}
Now, using the Strichartz estimate
\begin{equation}
|u|^{\frac{4}{d-2}} \in L_{t,x,\loc}^{\frac{d+1}{2}},
\end{equation}
and Lemma \ref{lem8} applied to the equation of $w$, and the fact $\Gamma = \{|x| = R\}$ with $\Gamma^+ = \{|x| > R\}$ is strongly pseudoconvex, we deduce that $w$ vanishes near the sphere $\{t=0, |x| = R\}$.  This is a contradiction with our initial assumption that $|x_0| = R$ and $x_0 \in \mathrm{supp}\, (w_0, w_1)$.

\smallskip

\textbf{Step 3}: We prove (A) by contradiction. Suppose both (A1) and (A2) are false. Reversing the time direction if necessary, we have
\begin{align} \label{502}
0 &= \liminf_{t \to + \infty} \int_{|x|> t} \left( |\partial_t u|^2 + |\nabla u|^2 \right) \mathrm{d}x \nonumber \\
&= \liminf_{t \to - \infty} \int_{|x|>-\varepsilon -t} \left( |\partial_t u|^2  + |\nabla u|^2 \right) \mathrm{d}x.
\end{align}
The first line implies that, for any $\delta > 0$, there exists $M_0 > 1$ such that
\begin{equation}
\int_{|x|> M_0} \left( |\partial_t u|^2 + |\nabla u|^2 \right)(M_0, x) \mathrm{d}x \le \delta.
\end{equation}
By Hardy estimate and Sobolev extension, there exists $(\tilde{u}_0, \tilde{u}_1)$ such that
\begin{equation}
(\tilde{u}_0(x), \tilde{u}_1(x)) = (u(M_0, x), u_1(M_0, x)), \quad \mathrm{for} \ |x| >  M_0,
\end{equation}
and
\begin{equation}
\|\tilde{u}_0\|_{\dot{H}^1(\mathbb{R}^d)}^2 + \|\tilde{u}_1\|_{L^2(\mathbb{R}^d)}^2 \lesssim \delta.
\end{equation}
Using the small-data well-posedness result for \eqref{202},  the finite speed of propagation and the arbitrary smallness of $\delta$, we deduce that 
\begin{align} \label{5005}
 \lim_{t \to + \infty} \int_{|x|> t} \left( |\partial_t u|^2 + |\nabla u|^2 \right) \mathrm{d}x = 0.
\end{align}
Similarly, for negative times, we have
\begin{align} \label{5006}
 \lim_{t \to - \infty} \int_{|x|>-\varepsilon -t} \left( |\partial_t u|^2 + |\nabla u|^2 \right) \mathrm{d}x = 0.
\end{align}
Now, consider the time-shifted solutions
\begin{align}
T_{-\e/2}u(t,x) = u(t - \frac{\e}{2}, x), \\
T_{-\e/2}w(t, x) = w(t - \frac{\e}{2}, x).
\end{align}
\eqref{5005}--\eqref{5006} are equivalent to
\begin{align}
 \lim_{t \to \pm \infty} \int_{|x|>-\varepsilon/2 + |t|} \left( |\partial_t (T_{-\e/2}u)|^2 + |\nabla (T_{-\e/2}u)|^2 \right) \mathrm{d}x = 0,
\end{align}
which is a contratiction with Step 2 applied to the time-shifted solutions. Hence, the proof of (A) is finished.

\smallskip

\textbf{Statement (B)}:  Let us argue by contradiction. Suppose the conclusion is false, that is,
\begin{align} \label{5010}
\liminf_{t \to + \infty} \int_{|x|>  - R + t} \left( |\nabla u|^2 + |\partial_t u|^2 \right) = 0.
\end{align}
Using the derivation of \eqref{5005}, \eqref{5010} can be improved to
\begin{align} \label{5011}
\lim_{t \to + \infty} \int_{|x|>  - R + t} \left( |\nabla u|^2 + |\partial_t u|^2 \right) = 0.
\end{align} 
Applying the same method from Step 2 above, we obtain (see the derivation of \eqref{vanish-line3})
\begin{equation}
w = 0, \quad \mathrm{for} \ |x|> R, \  0 \le t \le R,
\end{equation}
and also (see the derivation of \eqref{wvanishpara})
\begin{equation}
w = 0, \quad \mathrm{for} \ |x|^2 - (t - R )^2 > R^2 , \  t > R.
\end{equation}

\begin{figure}[!h]
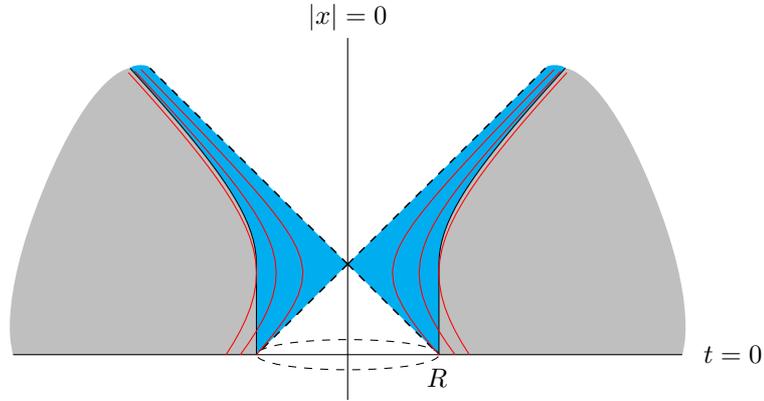

\centering
\tikz[scale = 2] {

\fill [cyan, domain=0:1.2, variable=\x]
  (0.6, 0) -- (0.6, 0.6)
  -- plot [domain=0:1.3] ( {sqrt(0.36+\x*\x)} , \x + 0.6) -- plot [smooth, tension = 1.5] coordinates {(1.4318, 1.9) (1.36,1.92) (1.3,1.9)} 
  -- (0,0.6) 
  -- cycle;

\fill [cyan, domain=0:1.2, variable=\x]
  (-0.6, 0) -- (-0.6, 0.6)
  -- plot [domain=0:1.3] ( -{sqrt(0.36+\x*\x)} , \x + 0.6) -- plot [smooth, tension = 1.5] coordinates {(-1.4318, 1.9) (-1.36,1.92) (-1.3,1.9)} 
  -- (0,0.6) 
  -- cycle;

\fill [lightgray]
  (0.6, 0) -- (0.6, 0.6) -- plot [color = red, domain=0:1.3] ( {sqrt(0.36+\x*\x)} , \x + 0.6) -- plot [smooth, tension = 1.5] coordinates {(1.4318, 1.9) (2,1.1) (2.2,0)} -- cycle;

\fill [lightgray]
  (-0.6, 0) -- (-0.6, 0.6) -- plot [color = red, domain=0:1.3] ( -{sqrt(0.36+\x*\x)} , \x + 0.6) -- plot [smooth, tension = 1.5] coordinates {(-1.4318, 1.9) (-2,1.1) (-2.2,0)} -- cycle;

\draw [dashed] (0,0) ellipse (0.6 and 0.1);

\draw (-2.2,0) -- (2.2,0);
\draw (0, -0.3) -- (0,2.1);
\draw [line width = 0.2mm, dashed] (0, 0.6) -- (1.3, 1.9);
\draw [line width = 0.2mm, dashed] (0, 0.6) -- (-1.3, 1.9);
\draw [line width = 0.2mm, dashed] (0, 0.6) -- (0.6, 0);
\draw [line width = 0.2mm, dashed] (0, 0.6) -- (-0.6, 0);
\draw  plot [ domain=0:1.3] ( {sqrt(0.36+\x*\x)} , \x + 0.6);
\draw  plot [ domain=0:1.3] ( {-sqrt(0.36+\x*\x)} , \x + 0.6);

\draw (0.6, 0) -- (0.6 , 0.6);
\draw (-0.6, 0) -- (-0.6, 0.6);

\draw [red, domain=-0.54:1.33] plot  ( {sqrt(0.63*0.63+\x*\x)-0.03} , \x + 0.54);
\draw [red, domain=-0.54:1.34]  plot ( {sqrt(0.25+\x*\x)-0.03} , \x + 0.54);
\draw [red, domain=-0.54:1.35] plot  ( {sqrt(0.63*0.63-0.54*0.54+\x*\x)-0.03} , \x + 0.54);

\draw [red, domain=-0.54:1.33] plot  ( -{sqrt(0.63*0.63+\x*\x)+0.03} , \x + 0.54);
\draw [red, domain=-0.54:1.34]  plot ( -{sqrt(0.25+\x*\x)+0.03} , \x + 0.54);
\draw [red, domain=-0.54:1.35] plot  ( -{sqrt(0.63*0.63-0.54*0.54+\x*\x)+0.03} , \x + 0.54);

\node[label=left:{$t = 0$}]  at (2.85, 0) {};
\node[label=right:{}]  at (-2.85, 0) {};
\node[label=below:{$|x| = 0$}]  at (0, 2.45) {};

\node[label=left:{$R$}]  at (0.78, -0.15) {};
}

\caption{In $(t, x)$ coordinates. Using the vanishing of $v$ in the gray region, the maximal range of unique continuation is the blue region which contains the Cauchy slice at time $t = R$. The red curves represent the strongly pseudoconvex hypersurfaces $\Gamma'_{a, \delta}$.}
\end{figure}

Next, we use a strongly pseudoconvex foliation $\Gamma'_{a,\delta}$ in $(t,x)$ coordinates, with $0 < \delta \ll R$ and $(R+\delta)^2 - (R-2\delta)^2 < a^2 \le (R + \delta)^2$:
\begin{equation}
\Gamma'_{a,\delta} : = \{\, (|x|+\delta)^2 - (t - R + 2\delta)^2 = a^2, t>0\}.
\end{equation}
Note that for $a^2 = (R+\delta)^2$, $w$ vanishes on the spatial exterior side of $\Gamma'_{a,\delta}$. Hence, using  Lemma \ref{lem8} applied to \eqref{202} and $\Gamma'_{a,\delta}$, and the arbitrary smallness of $\delta$, we get
\begin{equation}
w = 0, \quad \mathrm{for} \ |x| > |t - R| \ \mathrm{and} \ t \ge 0.
\end{equation}
In particular,
\begin{equation}
w = \partial_t w = 0, \quad \mathrm{at} \ t = R.
\end{equation}
Since $u$ satisfies the equation \eqref{202}, we deduce that $w=0$ for all spacetime points $(t,x)$, which is a contradiction with the assumption that $u-Q_l$ is nontrivial. Hence, we have finished the proof.

\end{proof} 

\begin{proof}[Proof of Corollary \ref{thm4}]
First, consider the case $T^+ = + \infty$. It is well-known that solutions satisfying the compactness property must be non-radiative, that is, for any $A \in \mathbb{R}$,
\begin{equation}
\int_{|x|> A +|t|} \left( |\partial_t u|^2 + |\nabla u|^2 \right)  dx \to 0, \quad \mathrm{as} \ t \to +\infty.
\end{equation}
Hence, we can directly apply Theorem \ref{thm2} statement (B) to deduce that $ (u_0 - Q_l(0, \cdot), u_1 - \partial_t Q_l(0, \cdot))$ cannot have compact spatial support (unless $u$ is identical to $Q_l$).

Next, we exclude the case $T^+ < + \infty$. Suppose $T^+ < +\infty$, then it is known that, due to the compactness property, $u$ is supported in the backward light cone $\left\{ |x - x^+| < T^+ - t, t < T^+ \right\}$ where $(T^+, x^+)$ is the unique blow-up point at $T^+$ time (see equation (2.13) in \cite{DKMCPAA}). In particular, in this case $(u_0, u_1)$ is compactly supported. If $T^- = -\infty$, then we can use the the argument in the last paragraph  for the time-reversed version of $u$ to obtain a contradiction. 

The case that both $T^-$ and $T^+$ are finite was excluded in \cite[Proposition 2.1]{DKMCPAA} using integral estimates. Here we give a new argument. In this case, by the compactness property of $u$, $u$ is supported in a compact region in spacetime (see equation (2.5) in \cite{DKMCPAA}), more precisely,
\begin{equation}
\mathrm{supp} \, (u, \partial_t u) \subset [T^-, T^+] \times \mathbb{R}^d \cap \left\{ |x-x^+| \le |t - T^+|, |x - x^-| \le |t - T^-| \right\}.
\end{equation}
Take a unit vector $\omega \in \mathbb{R}^d$ which is perpendicular to $x^+ - x^-$. Note that the vertical hyperplane $\{ (t, x) \in \mathbb{R}^{1+d}, x \cdot \omega = a \}$, $a \in \mathbb{R}$, are not pseudoconvex with respect to $\Box$. However, the vertical cylinders  $\{(t, x) \in \mathbb{R}^{1+d}, |x - N \omega| = N-a \}$ with large $N$ are strongly pseudoconvex (with the appropriate orientation), and locally approximate the vertical hyperplanes. Using foliations of such cylinders and Lemma \ref{lem8}, it is easy to deduce that $u$ vanishes in the spacetime slab $[T^-, T^+] \times \mathbb{R}^d$.  

The proof is now finished.
\end{proof}

\section*{Acknowledgement}
We would like to thank H. Jia for some useful discussions.
The authors were in part supported by NSFC (Grant No. 11725102), National Support Program for Young Top-Notch Talents, and Shanghai Science and Technology Program (Project
No. 21JC1400600 and No. 19JC1420101).

\bibliographystyle{plain}
\bibliography{references}

\end{document}